\newtheorem{thm}{Theorem}
\newtheorem{lem}[thm]{Lemma}
\newtheorem{cor}[thm]{Corollary}
\theoremstyle{definition}
\newtheorem{rem}[thm]{Remark}
\newtheorem*{note}{Note}
\newcommand{\R}{{\mathbb{R}}}
\begin{document}

\title[Gr\"unbaum's inequality]{Gr\"unbaum's inequality for sections}

\author{S.~Myroshnychenko, M.~Stephen, and N.~Zhang}

\address{Sergii Myroshnychenko, Department of Mathematical and Statistical Sciences, University of Alberta, Edmonton, Alberta, T6G 2G1, Canada}
\email{myroshny@ualberta.ca}
\address{Matthew Stephen, Department of Mathematical and Statistical Sciences, University of Alberta, Edmonton, Alberta, T6G 2G1, Canada}
\email{mastephe@ualberta.ca}
\address{Ning Zhang, Mathematical Sciences Research Institute, Berkeley, CA, 94720-5070 United States}
\email{nzhang2@ualberta.ca}

\subjclass[2010]{52A20; 52A38; 52A40}

\keywords{Centroid; Convex Bodies; Gr\"unbaum's inequality; Sections}


\begin{abstract}
We show
\begin{align*}
\frac{ \int_{E \cap \theta^+} f(x) dx }{ \int_E f(x) dx } 
	\geq \left(\frac{k \gamma+1}{(n+1) \gamma+1}\right)^{\frac{k \gamma+1}{\gamma}} 
\end{align*}
for all $k$-dimensional subspaces $E\subset\R^n$, $\theta\in E\cap S^{n-1}$, and all $\gamma$-concave functions $f:\R^n\rightarrow [0,\infty)$ with $\gamma >0$, $0< \int_{\R^n} f(x)\, dx <\infty$, and $\int_{\R^n} x f(x)\, dx$ at the origin $o\in\R^n$. Here, $\theta^+ := \lbrace x\, : \, \langle x,\theta\rangle \geq 0 \rbrace$. As a consequence of this result, we get the following generalization of Gr\"unbaum's inequality:
\begin{align*}
\frac{ \mbox{vol}_k(K\cap E\cap\theta^+) }{ \mbox{vol}_k(K\cap E) } \geq \left( \frac{k}{n+1} \right)^k
\end{align*}
for all convex bodies $K\subset\R^n$ with centroid at the origin, $k$-dimensional subspaces $E\subset\R^n$, and $\theta\in E\cap S^{n-1}$. The lower bounds in both of our inequalities are the best possible, and we discuss the equality conditions.
\end{abstract}

\maketitle

\section{Introduction}

An elegant inequality of Gr\"unbaum \cite{G1} gives a lower bound for the volume of that portion of a convex body lying in a halfspace which slices the convex body through its centroid. Let $K$ be a {\it convex body} in $\R^n$; that is, a convex and compact set with non-empty interior. Assume that the {\it centroid} of $K$,
\begin{align*}
g(K) := \frac{1}{\mbox{vol}_n(K) } \int_K x\, dx \in\mbox{int}(K) ,
\end{align*}
is at the origin. Given a unit vector $\theta\in S^{n-1}$, we define $\theta^+ := \lbrace x\, : \, \langle x,\theta\rangle \geq 0 \rbrace$. Specifically, Gr\"unbaum's inequality states that
\begin{align}\label{Grunbaum Ineq}
\frac{ \mbox{vol}_n(K\cap\theta^+) }{ \mbox{vol}_n(K) } \geq \left( \frac{n}{n+1} \right)^n .
\end{align}
There is equality when, for example, $K$ is the cone
\begin{align*}
\mbox{conv} \left( \frac{-1}{n+1} \theta + B_2^{n-1}, \, \frac{n}{n+1} \theta \right)
\end{align*}
and $B_2^{n-1}$ is the unit ball in $\theta^\perp$. This volume inequality was independently proven in \cite{M}.

Compare Gr\"unbaum's inequality with the following long known lower bound for the distance between the centroid $g(K) = o$ and a supporting hyperplane of $K$. The {\it support function} of $K$ is defined by $h_K(x) = \max_{y\in K} \langle x,y\rangle$ for $x\in\R^n$. Evaluated at the unit vector $\theta$, $h_K(\theta)$ gives the distance from the origin to the supporting hyperplane of $K$ in the direction $\theta$. Now, the aforementioned inequality is
\begin{align}\label{MinkRad ineq}
\frac{ h_K(\theta) }{ h_K(-\theta) + h_K(\theta) } \geq \frac{1}{n+1} .
\end{align}
There is equality when, for example, $K$ is the cone
\begin{align*}
\mbox{conv} \left( \frac{-n}{n+1} \theta + B_2^{n-1}, \, \frac{1}{n+1} \theta \right) .
\end{align*}
Refer to pages 57--58 of \cite{BF} for a discussion of (\ref{MinkRad ineq}).

A generalization of Gr\"unbaum's inequality was recently established in \cite{SZ} for projections of a convex body. Let $E$ be a $k$-dimensional subspace of $\R^n$, $1\leq k\leq n$, and let $K|E$ denote the orthogonal projection of $K$ onto $E$. ``Gr\"unbaum's inequality for projections" states
\begin{align}\label{Grunbaum Ineq Proj}
\frac{ \mbox{vol}_k \Big( (K|E)\cap\theta^+ \Big) }{ \mbox{vol}_k (K|E) } \geq \left( \frac{k}{n+1}\right)^k .
\end{align}
There is equality when, for example,
\begin{align}\label{Grunbaum Equality}
K = \mbox{conv}\left( \left( 1 - \frac{k}{n+1}\right)\theta + B_2^{k-1} ,\ \frac{k}{n+1}\theta + B_2^{n-k} \right) ,
\end{align}
$\theta\in E\cap S^{n-1}$, $B_2^{k-1}$ is the unit ball in $E\cap\theta^\perp$, and $B_2^{n-k}$ is the unit ball in $E^\perp$. See Corollary 9 in \cite{SZ} for the complete characterization of the equality conditions. Observe that Gr\"unbaum's inequality for projections provides a link between inequalities (\ref{Grunbaum Ineq}) and (\ref{MinkRad ineq}). Let us also emphasize that Gr\"unbaum's inequality does not imply (\ref{Grunbaum Ineq Proj}) because the centroid of $K|E$ is in general different from the centroid of $K$.

One goal of our paper is to establish a ``Gr\"unbaum's inequality for sections" with equality conditions (see Figure \ref{general}). What is the largest constant $c = c(n,k)>0$, depending only on $n$ and $k$, so that
\begin{align}\label{Grunbaum Ineq Sec}
\frac{ \mbox{vol}_k(K\cap E\cap\theta^+) }{ \mbox{vol}_k(K\cap E) } \geq c ?
\end{align}
\begin{figure}[ht]
  \centering
  \includegraphics[scale=0.2]{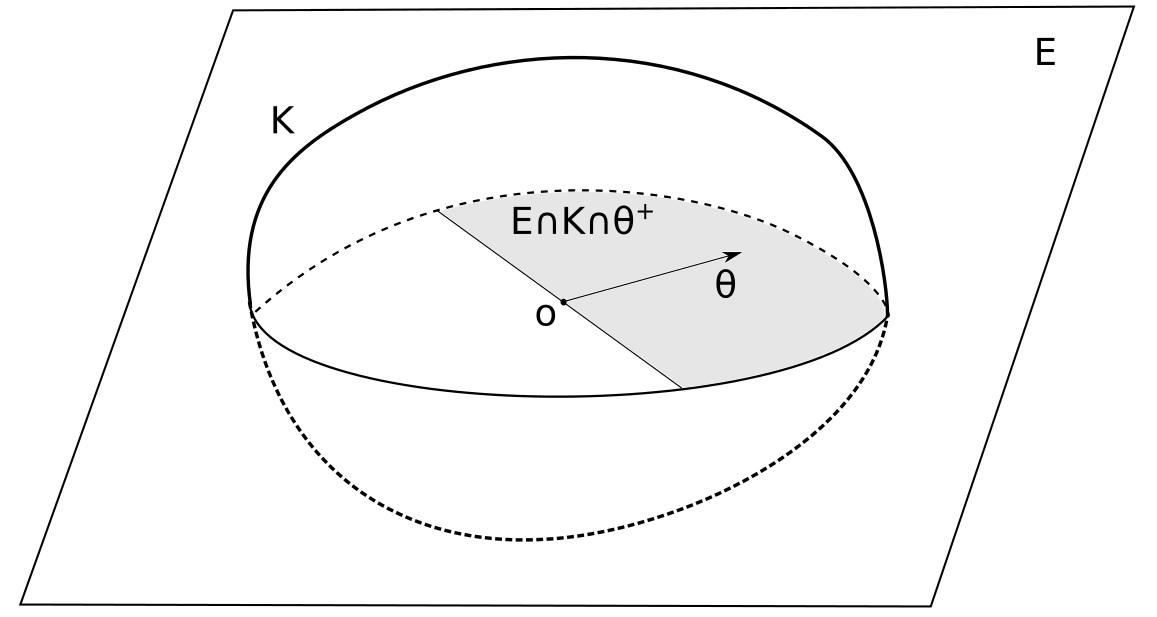}\\
  \caption{How small is $E \cap K \cap \theta^+$ compared to $E \cap K$?}\label{general}
\end{figure}

This question was first asked by Fradelizi, Meyer, and Yaskin in \cite{FMY}. They showed there is an absolute constant $c_0 >0$ so that
\begin{align*}
c \geq c_1 := \frac{ c_0 }{ (n-k+1)^2 } \left( \frac{k}{n+1}\right)^{k-2} ,
\end{align*}
but they did not prove $c = c_1$. Again, note that the value of $c$ cannot be obtained from Gr\"unbaum's inequality because the centroid of $K\cap E$ is in general different from the centroid of $K$. Given Gr\"unbaum's inequality for projections, it was conjectured in \cite{SZ} that $c = \left(\frac{k}{n+1}\right)^k$. To our knowledge, it is not possible to verify this conjecture directly from inequality (\ref{Grunbaum Ineq Proj}) for $1<k<n$.

We prove in this paper that $c = \left(\frac{k}{n+1}\right)^k$; see Corollary \ref{Grunbaum Ineq Sec Thm}. There is equality in (\ref{Grunbaum Ineq Sec}) when, for example, $K$ has the form in (\ref{Grunbaum Equality}). The complete characterization of the equality conditions is given in Corollary \ref{Grunbaum Ineq Sec Thm}. Gr\"unbaum's inequality for sections links inequalities (\ref{Grunbaum Ineq}) and (\ref{MinkRad ineq}), once it is observed that (\ref{MinkRad ineq}) is equivalent to
\begin{align*}
\frac{ \rho_K(\theta) }{ \rho_K(-\theta) + \rho_K(\theta) } \geq \frac{1}{n+1} .
\end{align*}

There are also several functional versions of Gr\"unbaum's inequality. If the function $f:\R^n\rightarrow [0,\infty)$ is integrable, {\it log-concave} (i.e. $\log f$ is concave on convex support), and such that  $\int_{\R^n} xf(x)\, dx = o$, then
\begin{align*}
\int_{\theta^+} f(x)\, dx \geq \frac{1}{e} \int_{\R^n} f(x)\, dx .
\end{align*}
This inequality is the limiting case of Gr\"unbaum's inequality when the dimension tends to infinity, where $f$ is approximated with an appropriate sequence of convex bodies $K_m\subset\R^m$. Refer to Lemma 2.2.6 in \cite{BGVV} for an alternative proof.

Our main result extends another functional version of Gr\"unbaum's inequality proven in \cite{MNRY}. It was shown in \cite{MNRY} that
\begin{align*}
\int_0^\infty f(s\theta) \, ds \geq \frac{1}{e^n} \int_{-\infty}^\infty f(s\theta) \, ds
\end{align*}
for every log-concave $f:\R^n\rightarrow [0,\infty)$ with a finite and positive integral, and $\int_{\R^n}xf(x) \, dx = o$. We say a function $f:\R^n\rightarrow [0,\infty)$ is $\gamma$-concave for $\gamma>0$ if $f^\gamma$ is concave on convex support. Adapting the methods used in \cite{MNRY}, we find $\tilde{c} = \tilde{c}(n,\gamma) = \left( \frac{ \gamma + 1 }{ \gamma n +\gamma + 1 } \right)^\frac{ \gamma +1 }{\gamma} >0 $ is the best constant so that
\begin{align*}
\int_0^\infty f(s\theta) \, ds \geq \tilde{c} \int_{-\infty}^\infty f(s\theta) \, ds
\end{align*}
for every $\gamma$-concave function $f:\R^n\rightarrow [0,\infty)$, $\gamma >0$, with $0<\int_{\R^n} f(x)\, dx <\infty$ and $\int_{\R^n} xf(x)\, dx = o$. See Theorem \ref{main} for the precise statement and the characterization of the equality conditions.

We state and prove Theorem \ref{main}, an integral inequality for one dimensional sections of $\gamma$-concave functions, in Section 2. In Section 3, we show that Theorem \ref{main} implies a corresponding integral inequality for $k$-dimensional sections of $\gamma$-concave functions: 
\begin{align*}
\int_{E \cap \theta^+} f(x) dx \geq \left(\frac{k \gamma+1}{(n+1) \gamma+1}\right)^{\frac{k \gamma+1}{\gamma}} \int_E f(x) dx
\end{align*}
for every $k$-dimensional subspace $E$ in $\R^n$, $\theta\in E\cap S^{n-1}$, and every $\gamma$-concave $f:\R^n\rightarrow [0,\infty)$ with $\gamma >0$, $0<\int_{\R^n} f(x)\, dx <\infty$, and $\int_{\R^n} xf(x)\, dx = o$. In Section 4, we prove Gr\"unbaum's inequality for sections as another consequence of Theorem \ref{main}.

\section{ One Dimensional Sections of $\gamma$-Concave Functions }

A function $f:\mathbb{R}^n\rightarrow [0,\infty)$ is {\it $\gamma$-concave} for $\gamma\in (-\infty,0)\cup(0,\infty)$ if
\begin{align}\label{gamma def}
f\big( \lambda x + (1-\lambda)y \big) \geq \big[ \lambda f^\gamma(x) + (1-\lambda) f^\gamma(y) \big]^\frac{1}{\gamma}
\end{align}
for all $0\leq \lambda\leq 1$ and all $x,y\in\mathbb{R}^n$ such that $f(x) \cdot f(y)\neq 0$. We say $f$ is {\it $\gamma$-affine} if inequality (\ref{gamma def}) is always an equality. These definitions are extended to $\gamma = 0,\, \pm\infty$ by continuity, and log-concavity corresponds to the case $\gamma = 0$. The support of a function $f$ will be denoted by $K_f := \mbox{supp}(f)$. If $f$ is $\gamma$-concave, then $K_f$ is a convex set. If $f$ is $\gamma$-concave for some $\gamma\in (0,\infty)$ with a positive and finite integral, then $K_f$ is a convex body in $\mathbb{R}^n$ (see Remark 2.2.7 (i) in \cite{BGVV}); in this case, we define the {\it centroid} of $f$ by
\begin{align*}
g(f):=\int_{\R^n} x f(x)\, dx \bigg/ \int_{\R^n} f(x)\,dx \in\mbox{int}(K_f) .
\end{align*}

\begin{note}
We will always implicitly assume that a $\gamma$-concave function is continuous on its support. This does not lead to a real loss of generality in our results. Indeed, a $\gamma$-concave $f$ must be continuous on the (relative) interior of $K_f$; assuming $f$ is continuous on $K_f$ at most requires a redefinition of $f$ on a set of measure zero.
\end{note}

Our main result is the following theorem:

\begin{thm}\label{main}
Fix $\theta\in S^{n-1}$ and $\gamma\in (0,\infty)$. Let $f:\mathbb{R}^n\rightarrow [0,\infty)$ be a $\gamma$-concave function with $0<\int_{\R^n} f(x)\, dx<\infty$ and centroid at the origin. Then
\begin{align*}
\frac{ \int_0^\infty f(s\theta) \, ds }{ \int_{-\infty}^\infty f(s\theta) \, ds }
	\geq \left( \frac{ \gamma + 1 }{ \gamma n +\gamma + 1 } \right)^\frac{ \gamma +1 }{\gamma} .
\end{align*}
There is equality if and only if
\begin{itemize}
\item $f(x) = m \mathcal{X}_{K_f}(x) \Big( - \langle x,\xi\rangle + r \langle \theta,\xi\rangle \Big)^\frac{1}{\gamma}$ for some constants $m,r >0$ and a unit vector $\xi\in S^{n-1}$ such that $\langle\theta,\xi\rangle >0$;
\item $K_f = \mbox{conv} \left( - \left( \frac{n\gamma}{\gamma +1}\right) r\theta, \ r \theta + D \right)$ for some $(n-1)$-dimensional convex body $D\subset\xi^\perp$ whose centroid (taken in $\xi^\perp$) is at the origin.
\end{itemize}
\end{thm}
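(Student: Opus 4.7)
The plan is to adapt the method of \cite{MNRY}. Write $\psi(s) := f(s\theta)$, which is $\gamma$-concave on its support, an interval $[-B, A] \subset \R$ with $A, B > 0$. Let $R := \int_{0}^{\infty} \psi\, ds / \int_{\R} \psi\, ds$. The strategy has two parts: reduce to the extremal cone configuration, then compute the ratio directly.

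The hard part is the reduction: one shows that the infimum of $R$ among admissible $f$ is attained when $f^\gamma$ is affine on its support $K_f$, with $K_f = \conv\left(-\frac{n\gamma r}{\gamma+1}\theta,\ r\theta + D\right)$ for some $r>0$ and an $(n-1)$-dimensional convex body $D \subset \xi^\perp$ with centroid $0$ in $\xi^\perp$, for some unit vector $\xi$ with $\langle\theta,\xi\rangle > 0$. The direction $\xi$ arises naturally as the common normal to the level hyperplanes of $f^\gamma$. I envision this reduction via a variational/symmetrization argument: perturb $f$ while preserving $\gamma$-concavity and the centroid condition, flattening the level sets of $f^\gamma$ into tilted half-spaces in a way that cannot increase $R$, and collapsing the level sets into homothets of a fixed base body.

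For the extremal cone configuration, a direct calculation finishes the proof. Parametrize $K_f$ by $x(t,y) := (t(r+B) - B)\theta + ty$ for $(t,y) \in [0,1] \times D$, where $B := n\gamma r/(\gamma+1)$. Writing $\xi = \alpha\theta + \xi'$ with $\alpha = \langle\theta,\xi\rangle \in (0,1]$ and $\xi' \in \theta^\perp$, one checks $\langle x(t,y),\xi\rangle = \alpha(t(r+B) - B)$ using $y \perp \xi$, so $f(x(t,y)) = m\alpha^{1/\gamma}(r+B)^{1/\gamma}(1-t)^{1/\gamma}$ depends only on $t$; the Jacobian of the parametrization is $\alpha(r+B)\,t^{n-1}$. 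The centroid condition $\int_{\R^n} xf(x)\,dx = 0$ along $\theta$ reduces, via $\int_D y\,dy = 0$ and standard Beta integrals, to the single relation $(r+B) n\gamma/((n+1)\gamma+1) = B$, equivalent to $B = n\gamma r/(\gamma+1)$; the centroid components perpendicular to $\theta$ vanish automatically from $\int_D y\,dy = 0$. The axial integrals evaluate to $\int_0^r \psi\,ds = m\alpha^{1/\gamma}\gamma\, r^{(\gamma+1)/\gamma}/(\gamma+1)$ and $\int_{-B}^r \psi\,ds = \frac{m\alpha^{1/\gamma}\gamma}{\gamma+1}\left(\frac{r((n+1)\gamma+1)}{\gamma+1}\right)^{(\gamma+1)/\gamma}$, and their ratio is $\left(\frac{\gamma+1}{(n+1)\gamma+1}\right)^{(\gamma+1)/\gamma}$, matching the claim.

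The main obstacle is the reduction step. The subtlety is that the family of extremizers is rich, parametrized by $(r, \xi, D)$ and including both ``straight'' cones (with $\xi = \theta$) and genuinely ``tilted'' cones ($\xi \neq \theta$); the reduction argument must drive $R$ downward to the extremal value while not prematurely collapsing this degeneracy, if one wants a sharp characterization of equality. A sensible implementation would proceed in two stages: (i) reduce the axial profile $\psi$ to the $\gamma$-affine form $m\alpha^{1/\gamma}(r-s)^{1/\gamma}$ by a 1-D Grünbaum-type argument under the weakened constraint inherited from $\int xf = o$; and (ii) replace the slices of $K_f$ perpendicular to some level direction $\xi$ by scaled copies of a base body $D$, using the $\gamma$-concavity of $f$ together with a Brunn--Minkowski-type concavity for the measure of level sets to ensure that the symmetrization does not increase the axial ratio.
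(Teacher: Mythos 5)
Your final computation for the extremal configuration is correct and coincides with what the paper does in its last step (Subsection 2.3 of the paper computes $g(T)$ and the axial integrals for the tilted cone and obtains the same ratio $\bigl(\frac{\gamma+1}{(n+1)\gamma+1}\bigr)^{(\gamma+1)/\gamma}$, including the relation $r_0=-\frac{n\gamma}{\gamma+1}r_1$ that you derive as $B=\frac{n\gamma r}{\gamma+1}$). The overall strategy you describe --- reduce to the tilted cone with $\gamma$-affine profile, then compute --- is also the paper's strategy.

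However, there is a genuine gap: the reduction, which is the mathematical core of the theorem, is never actually carried out. Phrases such as ``I envision this reduction via a variational/symmetrization argument'' and ``a sensible implementation would proceed in two stages'' describe an intention, not a proof, and the two stages you sketch omit the one mechanism that makes the argument work. The difficulty is that the constraint $\int x f(x)\,dx=o$ lives in $\R^n$ while the quantity being bounded involves only the restriction of $f$ to the $\theta$-axis; your stage (i), a ``1-D Gr\"unbaum-type argument under the weakened constraint inherited from $\int xf=o$,'' does not say what that inherited constraint is or why it survives the modification. The paper's resolution is to transform \emph{every} slice $f_{x'}(s)=f(x'+s\theta)$, $x'\in K_f|\theta^\perp$, simultaneously into a $\gamma$-affine slice $F_{x'}(s)=\mathcal{X}_{[\Psi(x'),H(x')/\beta]}(s)(-\beta s+H(x'))^{1/\gamma}$, where $\beta$ is calibrated on the central slice, $H$ is an affine majorant (touching at $o$) of a concave auxiliary function $\widetilde H$ whose concavity comes from Borell--Brascamp--Lieb, and $\Psi$ preserves the mass of each slice. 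The point of this construction is the tail-domination inequality $\int_a^\infty f_{x'}\le\int_a^\infty F_{x'}$ for all $a$, which by integration by parts shows each slice's first moment can only increase, hence $\langle g(F),\theta\rangle\ge 0$; since the axial profile above $0$ and all slice masses are preserved, the ratio recomputed from the new centroid can only decrease. A second step of the same flavour (replacing the support by a cone over the top level set and comparing section functions via Brunn--Minkowski) again pushes the centroid in the $+\theta$ direction. Without an argument of this kind --- in particular, without controlling the sign of the centroid displacement --- your symmetrization could move the centroid the wrong way and increase the ratio, and the equality characterization (which requires tracing when each of these inequalities is tight) is likewise not established.
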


For the remainder of Section 2 we fix $\theta\in S^{n-1}$, $\gamma\in (0,\infty)$, and a $\gamma$-concave $f:\mathbb{R}^n\rightarrow [0,\infty)$ satisfying the hypotheses of Theorem \ref{main}. We prove Theorem \ref{main} in subsections 2.1 to 2.3 by transforming $f$ into a function having the form from the equality case, while showing that the ratio $\int_{\langle g(f),\theta\rangle}^\infty f(s\theta) \, ds / \int_{-\infty}^\infty f(s\theta) \, ds$ can only decrease.

\subsection{Replacing $\gamma$-concave slices with $\gamma$-affine slices}

For each $x'\in K_f|\theta^\perp$, define $f_{x'}:\R\rightarrow [0,\infty)$ to be the one dimensional restriction $f_{x'}(s):=f(x'+s\theta)$. We will transform each slice $f_{x'}$ into a $\gamma$-affine function of the form
\begin{align}\label{affine_slice}
F_{x'}(s):=\mathcal{X}_{\left[\Psi(x'),\frac{H(x')}{\beta}\right]}(s)(-\beta s+H(x'))^\frac{1}{\gamma} ,
\end{align}
where $\Psi,H: K_f|\theta^\perp\rightarrow\R$ are functions and $\beta>0$ is a constant. As the first step in constructing $F_{x'}$, choose
\begin{align*}
\beta := \frac{ \gamma f_o^{\gamma + 1}(0) }{ (\gamma + 1) \int_0^\infty f_o(s)\, ds } > 0
\end{align*}
so that
\begin{align}\label{beta}
\int_0^\infty f_o(s)\, ds = \left( \frac{\gamma f_o^{\gamma +1}(0)}{\gamma +1} \right) \frac{1}{\beta}
	= \int_0^{\frac{f^\gamma_o(0)}{\beta}} (-\beta s+f^\gamma_o(0))^\frac{1}{\gamma}\, ds .
\end{align}

Before describing $H$, we introduce the auxiliary function $\widetilde{H}:K_f|\theta^\perp\rightarrow\R$ defined by
\begin{align*}
\widetilde{H}(x') := \max_{ a\in\mbox{supp}(f_{x'})} \widetilde{H}(x';a) \qquad \mbox{for} \qquad x'\in K_f|\theta^\perp ,
\end{align*}
where
\begin{align*}
\widetilde{H}(x';a) := \left[ \frac{\beta(\gamma+1)}{\gamma}
	\int_{a}^{\infty} f_{x'}(s) \, ds \right]^\frac{\gamma}{\gamma+1} + \beta a .
\end{align*}
The function $\widetilde{H}$ is well-defined with $\widetilde{H}(x')\in\mathbb{R}$ for every $x'\in K_f|\theta^\perp$, because $\mbox{supp}(f_{x'})$ is a compact interval and $\widetilde{H}(x';\cdot):\R\rightarrow\R$ is continuous. For the moment, fix $a\in \mbox{supp} (f_{x'})$ and $h\geq\widetilde{H}(x')$. It follows from the definition of $\widetilde{H}$ that $h\geq \beta a$. Furthermore, 
\begin{align}\label{h>H}
\int_{a}^{\infty} f_{x'}(s) \,ds < \frac{\gamma}{ \beta ( \gamma + 1 )} (-\beta a + h)^\frac{\gamma +1}{\gamma}
	= \int_{a}^{\frac{h}{\beta}} (-\beta s+h)^\frac{1}{\gamma} \, ds 
\end{align}
if and only if $h > \widetilde{H}(x')$ or $h = \widetilde{H}(x') > \widetilde{H}(x';a)$, and
\begin{align}\label{h=H}
\int_{a}^{\infty} f_{x'}(s) \,ds = \frac{\gamma}{ \beta ( \gamma + 1 )} (-\beta a + h)^\frac{\gamma +1}{\gamma}
	= \int_{a}^\frac{h}{\beta} (-\beta s+ h )^\frac{1}{\gamma} \, ds
\end{align}
if and only if $h = \widetilde{H}(x') = \widetilde{H}(x';a)$. More generally,
\begin{align}\label{h>=H}
\int_{a}^{\infty} f_{x'}(s) \,ds \leq \int_{a}^\infty \chi_{\big( -\infty,\frac{h}{\beta}\big]}(s)
	(-\beta s+h)^\frac{1}{\gamma} \, ds 	
\end{align}
for all $a\in\R$ and $h\geq \widetilde{H}(x')$.

We now prove that $\widetilde{H}(o) = f_o^\gamma(0)$. The function $f_o^\gamma:\R\rightarrow [0,\infty)$ is concave on its support,
\begin{align*}
l(s):= \mathcal{X}_{\big(-\infty, \frac{f^\gamma_o(0)}{\beta}\big]} (s) (-\beta s+f^\gamma_o(0))
\end{align*}
is affine on its support, and $f_o^\gamma(0) = l(0)$; these facts and equality (\ref{beta}) imply there is some $0< s' < f^\gamma_o(0) / \beta$ for which
\begin{align*}
&f_o(s) < (-\beta s+f^\gamma_o(0))^\frac{1}{\gamma} \quad \mbox{whenever} \quad s<0 , \\
&f_o(s) > (-\beta s+f^\gamma_o(0))^\frac{1}{\gamma} \quad \mbox{whenever} \quad 0<s<s' , \\
\mbox{and} \quad &f_o(s) < (-\beta s+f^\gamma_o(0))^\frac{1}{\gamma} \quad \mbox{whenever} \quad
	s' < s < \frac{f^\gamma_o(0)}{\beta} .
\end{align*}
It follows that $\mbox{supp}(f_o)\subset \big( -\infty, f_o^\gamma(0)/\beta \big]$ and
\begin{align*}
\int_a^\infty \left( l^\frac{1}{\gamma}(s) - f_o(s) \right) ds \geq 0 \quad \mbox{for all} \quad a\in\R.
\end{align*}
Therefore, if $\widetilde{H}(o) > f_o^\gamma(0)$, then
\begin{align*}
\int_a^\frac{\widetilde{H}(o)}{\beta} (-\beta s+ \widetilde{H}(o) )^\frac{1}{\gamma} \, ds
	> \int_a^{\frac{f^\gamma_o(0)}{\beta}} (-\beta s+f^\gamma_o(0))^\frac{1}{\gamma}\, ds
	\geq \int_a^\infty f_o(s)\, ds
\end{align*}
for every $a\in\mbox{supp}(f_o)$. Choosing $a\in\mbox{supp}(f_o)$ so that $\widetilde{H}(o) = \widetilde{H}(o,a)$, this last inequality contradicts (\ref{h=H}). On the other hand, if $\widetilde{H}(o)< f_o^\gamma(0)$, then equation (\ref{beta}) contradicts (\ref{h>H}).

We claim $\widetilde{H}$ is concave on $K_f|\theta^\perp$. Indeed, let $0\leq\lambda\leq 1$ and $x'_1,x'_2\in K_f|\theta^\perp$. For $j = 1,2$, choose $a_j\in\mbox{supp}(f_{x_j'})$ so that $\widetilde{H}(x_j') = \widetilde{H}(x_j';a_j)$. The Borell-Brascamp-Lieb inequality (see, for example, Theorem 10.1 in \cite{Ga1}), equality (\ref{h=H}), and inequality (\ref{h>=H}) then imply
\begin{align*}
&   \left[\frac{\gamma}{\beta (\gamma+1)}\right]^{\frac{\gamma}{\gamma+1}}
\left(-\beta \big(\lambda a_1 +(1-\lambda)a_2\big) +\widetilde{H}\big(\lambda x'_1 + (1-\lambda)x'_2\big)\right) \nonumber \\
=&\left[\int_{\lambda a_1+(1-\lambda)a_2}^{\frac{\widetilde{H}\big(\lambda x'_1 + (1-\lambda)x'_2\big)}{\beta}}(-\beta s + \widetilde{H}\big(\lambda x'_1 + (1-\lambda)x'_2\big))^\frac{1}{\gamma}\,ds\right]
^{\frac{\gamma}{\gamma+1}} \nonumber \\
\geq & \left[\int_{\lambda a_1+(1-\lambda)a_2}^{\infty}f(\lambda x'_1+(1-\lambda)x'_2+s\theta)\,ds\right]
	^{\frac{\gamma}{\gamma+1}} \nonumber \\
\geq & \lambda \left[\int_{a_1}^{\infty}f(x_1'+s\theta)\,ds\right]^{\frac{\gamma}{\gamma+1}}+(1-\lambda) 		
	\left[\int_{a_2}^{\infty}f(x_2'+s\theta)\,ds\right]^{\frac{\gamma}{\gamma+1}} \nonumber \\
= & \lambda \left[\int_{a_1}^{\frac{\widetilde{H}(x_1')}{\beta}}(-\beta s + \widetilde{H}(x_1'))^\frac{1}{\gamma}\,ds
	\right]^{\frac{\gamma}{\gamma+1}} +(1-\lambda) \left[\int_{a_2}^{\frac{\widetilde{H}(x_2')}{\beta}}
	(-\beta s + \widetilde{H}(x_2'))^\frac{1}{\gamma}\,ds\right]^{\frac{\gamma}{\gamma+1}} \nonumber \\
= &\lambda\left[\frac{\gamma}{\beta(\gamma+1)}\right]^{\frac{\gamma}{\gamma+1}}(-\beta a_1 + \widetilde{H}(x_1'))
	+ (1-\lambda) \left[\frac{\gamma}{\beta (\gamma+1)}\right]^{\frac{\gamma}{\gamma+1}}
	(-\beta a_2 + \widetilde{H}(x_2')) \nonumber \\
= & \left[\frac{\gamma}{\beta (\gamma+1)}\right]^{\frac{\gamma}{\gamma+1}}
	\left(-\beta \big(\lambda a_1 +(1-\lambda)a_2\big) +\lambda \widetilde{H}(x_1')
	+ (1-\lambda) \widetilde{H}(x_2')\right).
\end{align*}
Therefore, we must have
\begin{align*}
\widetilde{H}\big(\lambda x'_1 + (1-\lambda)x'_2\big)\geq \lambda \widetilde{H}(x_1') + (1-\lambda) \widetilde{H}(x_2').
\end{align*}

As $\widetilde{H}:K_f|\theta^\perp\rightarrow\R$ is concave, there is a linear function $L:\theta^\perp\to \R$ such that $\widetilde{H}(x')\leq \widetilde{H}(o) + L(x')$ for every $x'\in K_f|\theta^\perp$. Recalling that $\widetilde{H}(o) = f^\gamma(o)$, we now put
\begin{align*}
H(x'):=f^\gamma(o)+L(x') \quad \mbox{for all} \quad x'\in K_f|\theta^\perp ,
\end{align*}
so that $H$ is an affine function on $K_f|\theta^\perp$.

Having defined $\beta>0$ and $H:K_f|\theta^\perp\rightarrow\R$, we finally choose $\Psi(x')\in\R$ so that $\Psi(x') \leq H(x')/\beta$ and
\begin{align}\label{EqualMass}
\int_{-\infty}^\infty F_{x'}(s)\, ds = \int_{-\infty}^\infty f_{x'}(s)\, ds,
\end{align}
where $F_{x'}(s)$ is defined as in (\ref{affine_slice}). Then,
\begin{align*}
\frac{\gamma}{ \beta (\gamma +1) } (-\beta\Psi(x')+H(x'))^\frac{\gamma+1}{\gamma} = \int_{\Psi(x')}^{\frac{H(x')}{\beta}}
	(-\beta s+H(x'))^\frac{1}{\gamma} \, ds=\int_{-\infty}^\infty f(x'+s\theta)\,ds,
\end{align*}
which gives
\begin{align*}
\Psi(x')=\frac{1}{\beta}\left[f^\gamma(o) + L(x') - \left(\frac{\beta(\gamma+1)}{\gamma}
	\int_{-\infty}^\infty f(x'+s\theta)\,ds\right)^{\frac{\gamma}{\gamma+1}}\right].
\end{align*}
Since $L(x')$ is linear and $x'\mapsto \int_\mathbb{R} f(x'+s\theta)\,ds$ is $\frac{\gamma}{\gamma+1}$-concave (again by the Borell-Brascamp-Lieb inequality), we have that $\Psi:K_f|\theta^\perp\rightarrow\R$ is convex.

Now, define the function $F:\R^n\rightarrow [0,\infty)$ by
\begin{align}\label{F}
F(x) := \mathcal{X}_{K_F}(x) \Big( -\beta \langle x,\theta\rangle + H\big(x-\langle x,\theta\rangle\theta\big)\Big)^\frac{1}{\gamma}
	\quad \mbox{for} \quad x\in\R^n .
\end{align}
Here,
\begin{align*}
K_F = \bigg\lbrace x\in\R^n : \, x' = \big( x - \langle x,\theta\rangle \theta \big) \in K_f|\theta^\perp
	\mbox{ and } \Psi(x')\leq \langle x,\theta\rangle \leq \frac{ H(x') }{\beta} \bigg\rbrace ,
\end{align*}
and $\beta$, $H$, $\Psi$ are as previously constructed. The set $K_F$ is a convex body in $\R^n$ with $K_F|\theta^\perp = K_f|\theta^\perp$, because $\Psi,H:K_f|\theta^\perp\rightarrow\R$ are, respectively, convex and concave with $\Psi<H/\beta$ on the relative interior of $K_f|\theta^\perp$, and $\Psi\leq H/\beta$ on the relative boundary. Therefore, it is clear that $F$ is $\gamma$-affine with $\mbox{supp}(F) = K_F$.

Also note that $F(x'+s\theta) \equiv F_{x'}(s)$ for each $x'\in K_F|\theta^\perp$, where $F_{x'}$ is the $\gamma$-affine slice defined in (\ref{affine_slice}). Equality (\ref{beta}) remains true if the right-hand side is replaced with $\int_0^\infty F_o(s)\, ds$ because of $H(o) = f^\gamma(o)$ and the choice of $\Psi(o)$ in (\ref{EqualMass}). Similarly, (\ref{h>=H}) is still valid when the right-hand side is replaced with $\int_{a}^\infty F_{x'}(s)\, ds$. When we reference (\ref{beta}) and (\ref{h>=H}) in the proof of the next lemma, we will be referring to their altered forms.

\begin{lem}\label{affine}
The centroid $g(F)$ lies on the $\theta$-axis and $\langle g(F),\theta\rangle \geq 0$. Furthermore,
\begin{align}\label{F_ratio}
\frac{ \int_{\langle g(F),\theta\rangle}^\infty F(s\theta)\, ds}{\int_{-\infty}^\infty F(s\theta)\, ds }
	\leq \frac{ \int_0^\infty F(s\theta)\, ds}{\int_{-\infty}^\infty F(s\theta)\, ds }
	= \frac{ \int_0^\infty f(s\theta)\, ds}{ \int_{-\infty}^\infty f(s\theta)\, ds} ,
\end{align}
with equality if and only if $K_F = K_f$ and $F\equiv f$.
\end{lem}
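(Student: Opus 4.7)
The strategy rests on two slice-wise properties of the construction: for each $x'\in K_f|\theta^\perp$,
(a) $\int_\R F_{x'}(s)\,ds = \int_\R f_{x'}(s)\,ds$, which is precisely (\ref{EqualMass}); and
(b) $\int_a^\infty F_{x'}(s)\,ds \geq \int_a^\infty f_{x'}(s)\,ds$ for every $a\in\R$.
Property (b) follows from the altered (\ref{h>=H}) when $a\geq \Psi(x')$ (since $H(x')\geq \widetilde{H}(x')$), and from (a) together with non-negativity of $f_{x'}$ when $a<\Psi(x')$, because in that case $\int_a^\infty F_{x'}=\int_\R F_{x'}=\int_\R f_{x'}\geq \int_a^\infty f_{x'}$. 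By (a) the marginals of $F$ and $f$ onto $\theta^\perp$ coincide, so the $\theta^\perp$-components of $\int_{\R^n} x F(x)\,dx$ and $\int_{\R^n} x f(x)\,dx$ agree, and the latter vanishes by hypothesis; this places $g(F)$ on the $\theta$-axis.

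For $\langle g(F),\theta\rangle\geq 0$, I would set $G_{x'}(a):= \int_a^\infty (F_{x'}-f_{x'})(s)\,ds$, which is nonnegative by (b), compactly supported, and satisfies $G_{x'}(\pm\infty)=0$ by (a). An integration by parts gives
\begin{align*}
\int_{\R} s\bigl(F_{x'}(s)-f_{x'}(s)\bigr)\,ds = \int_\R G_{x'}(s)\,ds \geq 0 ,
\end{align*}
and integrating over $x'\in\theta^\perp$ produces $\int_{\R^n}\langle x,\theta\rangle F\,dx \geq \int_{\R^n}\langle x,\theta\rangle f\,dx = 0$, so $\langle g(F),\theta\rangle\geq 0$. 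The first inequality in (\ref{F_ratio}) is then immediate from this and $F\geq 0$. For the middle equality, the identity $H(o)=f^\gamma(o)$ and the observation $\Psi(o)\leq 0$ (both from the construction and (\ref{beta})) give
\begin{align*}
\int_0^\infty F(s\theta)\,ds = \int_0^{H(o)/\beta}\bigl(-\beta s + H(o)\bigr)^{1/\gamma}\,ds = \int_0^\infty f(s\theta)\,ds ,
\end{align*}
while the denominators agree by (a) at $x'=o$.

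For the equality characterization, note that $F(0)=H(o)^{1/\gamma}=f(o)>0$, so by continuity $F(s\theta)>0$ on a neighborhood of $0$; thus equality in (\ref{F_ratio}) forces $\langle g(F),\theta\rangle = 0$. Reversing the integration by parts above, this in turn forces $\int_\R G_{x'}(s)\,ds=0$ for almost every $x'$, and since $G_{x'}\geq 0$ this gives $G_{x'}\equiv 0$ and thus $F_{x'}\equiv f_{x'}$ a.e.\ on $\R$ for a.e.\ $x'\in K_f|\theta^\perp$. Continuity of $F$ and $f$ on their supports (by the note preceding Theorem \ref{main}) upgrades this to $F\equiv f$ pointwise on $\R^n$, and $K_F=K_f$ follows by taking supports. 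I expect the most delicate bookkeeping to be in the two cases for (b) (especially the regime $a<\Psi(x')$) and in the equality analysis, where the step from $\int G_{x'}=0$ a.e.\ back to the pointwise identity $F\equiv f$ requires combining a measure-theoretic argument with the continuity hypothesis on $f$.
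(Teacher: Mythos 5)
Your proposal is correct and follows essentially the same route as the paper: equal slice masses place $g(F)$ on the $\theta$-axis, the tail-mass domination $\int_a^\infty F_{x'} \geq \int_a^\infty f_{x'}$ plus integration by parts gives $\langle g(F),\theta\rangle \geq 0$, and the equality case is obtained by reversing these steps. You additionally spell out two details the paper leaves implicit (the case $a<\Psi(x')$ in the altered form of (\ref{h>=H}), and why $F(s\theta)>0$ near $s=0$ forces $\langle g(F),\theta\rangle=0$ at equality), both of which are correct.
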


\begin{proof}
Because the mass of $F$ along lines parallel to $\R \theta$ is the same as for $f$ (see equation (\ref{EqualMass})), $g(F)$ will lie on the $\theta$-axis. Integration by parts, the fact that $H(x')\geq \widetilde{H}(x')$, and inequality (\ref{h>=H}) together imply
\begin{align}\label{F_centroid1}
\int_{-\infty}^\infty s\big( F_{x'}(s) - f_{x'}(s) \big) \, ds
	= \int_{-\infty}^\infty \int_t^\infty \big( F_{x'}(s) - f_{x'}(s) \big) \, ds \, dt \geq 0
\end{align}
for all $x'\in K_f|\theta^\perp$. Inequality (\ref{F_centroid1}), equation (\ref{EqualMass}), and $K_F|\theta^\perp = K_f|\theta^\perp$ now show
\begin{align}\label{F_centroid2}
\langle g(F),\theta\rangle = \frac{ \int_{\R^n} \langle x,\theta\rangle F(x)\, dx }{ \int_{\R^n} F(x)\, dx }
	&= \frac{ \int_{K_F|\theta^\perp} \int_{-\infty}^\infty s F_{x'}(s)\, ds \, dx' }
	{ \int_{K_F|\theta^\perp} \int_{-\infty}^\infty F_{x'}(s)\, ds \, dx' } 	\nonumber \\
&\geq \frac{ \int_{K_f|\theta^\perp} \int_{-\infty}^\infty s f_{x'}(s)\, ds \, dx' }
	{ \int_{K_f|\theta^\perp} \int_{-\infty}^\infty f_{x'}(s)\, ds \, dx' } = \langle g(f),\theta\rangle = 0 .
\end{align}
Inequality (\ref{F_centroid2}), equation (\ref{beta}), and equation (\ref{EqualMass}) immediately give (\ref{F_ratio}).

Suppose there is equality in (\ref{F_ratio}). Equality in (\ref{F_ratio}) is only possible if $\langle g(F),\theta\rangle = 0$, which implies equality in (\ref{F_centroid2}). It then follows from inequality (\ref{F_centroid1}) and the equality in (\ref{F_centroid2}) that
\begin{align*}
&\int_{K_f|\theta^\perp} \left| \int_{-\infty}^\infty s \big( F_{x'}(s) - f_{x'}(s) \big) \, ds \right| \, dx' \\
&= \int_{K_F|\theta^\perp} \int_{-\infty}^\infty s F_{x'}(s) \, ds \, dx'
	- \int_{K_f|\theta^\perp} \int_{-\infty}^\infty s f_{x'}(s) \, ds \, dx' = 0 .
\end{align*}
With continuity, we necessarily have
\begin{align*}
\int_{-\infty}^\infty s \big( F_{x'}(s) - f_{x'}(s) \big) \, ds = 0
\end{align*}
for every $x'\in K_f|\theta^\perp$, so there is equality in (\ref{F_centroid1}). Inequality (\ref{h>=H}) and the equality in (\ref{F_centroid1}) imply
\begin{align*}
\int_{-\infty}^\infty \left| \int_t^\infty \big( F_{x'}(s) - f_{x'}(s) \big) \, ds \right| \, dt
	= \int_{-\infty}^\infty \int_t^\infty \big( F_{x'}(s) - f_{x'}(s) \big) \, ds \, dt = 0 .
\end{align*}
Again invoking continuity, we get that
\begin{align*}
\int_t^\infty f_{x'}(s) \, ds = \int_t^\infty F_{x'}(s) \, ds
\end{align*}
for all $x'\in K_f|\theta^\perp$ and $t\in\R$, so the supports of $F$ and $f$  must coincide. We conclude $F\equiv f$, after differentiating both sides of the last equation with respect to $t$.
\end{proof}

\subsection{Replacing the domain with a cone}

Let $q:\mathbb{R}^n\rightarrow [0,\infty)$ be a $\gamma$-affine function with centroid at the origin, and having the form
\begin{align*}
q(x) = \mathcal{X}_{K_q}(x)
	\Big( -\alpha \langle x,\theta\rangle + G\big(x-\langle x,\theta\rangle\theta\big)\Big)^\frac{1}{\gamma} ;
\end{align*}
$\alpha >0$ is any positive constant, and $K_q = \mbox{supp}(q)$ is any convex body such that
\begin{align*}
K_q = \bigg\lbrace x\in\R^n : \, x' = \big( x - \langle x,\theta\rangle \theta \big) \in K_q|\theta^\perp
	\mbox{ and } \Phi(x')\leq \langle x,\theta\rangle \leq \frac{ G(x') }{\alpha} \bigg\rbrace
\end{align*}
for some respectively convex and affine functions $\Phi,\,G:K_q|\theta^\perp\rightarrow\R$. Distinct level sets of $q$ lie within distinct but parallel hyperplanes, because $q$ is $\gamma$-affine. Also, the set $\lbrace q(x) = 0\rbrace \cap K_q$ lies entirely within the boundary of $K_q$ and intersects the positive $\theta$-axis, because of the particular form of $q$. Let $\eta\in S^{n-1}$ be the outward facing unit normal to $\lbrace q(x) = 0\rbrace\cap K_q$ (see Figure \ref{cones}). We then have
\begin{align*}
\lbrace q(x) = 0\rbrace\cap K_q = K_q\cap \lbrace h_{K_q}(\eta)\, \eta +\eta^\perp\rbrace
\end{align*}
and $\langle\theta,\eta\rangle >0$.
\begin{figure}[ht]
  \centering
  \includegraphics[scale=0.35]{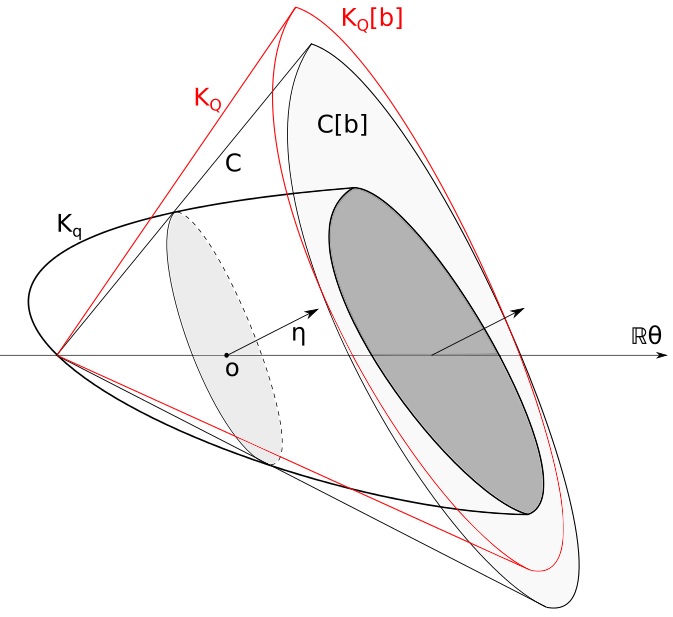}\\
  \caption{The construction of cones $C$ and $K_Q$.}\label{cones}
\end{figure}

Let $C$ be the $n$-dimensional cone with vertex $-\rho_{K_q}(-\theta)\, \theta\in K_q$, base lying in the hyperplane $\lbrace h_{K_q}(\eta)\, \eta +\eta^\perp\rbrace$, and for which $C\cap\eta^\perp = K_q\cap\eta^\perp$. Note that
\begin{align*}
-h_{K_q}(-\eta) \leq -h_C(-\eta) = -\rho_{K_q}(-\theta) \langle\theta,\eta\rangle
	< 0 < h_C(\eta) = h_{K_q}(\eta).
\end{align*}
The ``section volume" functions
\begin{align*}
A_{C,\eta} (t) = \mbox{vol}_{n-1} \big( C \cap \lbrace t\eta + \eta^\perp \rbrace \big), \quad
	A_{K_q,\eta} (t) = \mbox{vol}_{n-1} \big( K_q \cap \lbrace t\eta + \eta^\perp \rbrace \big) , \quad t\in\R,
\end{align*}
are $1/(n-1)$-concave by the Brunn-Minkowski inequality. In fact, an explicit calculation shows $A_{C,\eta}$ is $1/(n-1)$-affine. As we also have
\begin{align*}
A_{C,\eta}\Big(-h_C(-\eta)\Big) = 0 \leq A_{K_q,\eta}\Big(-h_C(-\eta)\Big) \quad \mbox{and} \quad
	A_{C,\eta}(0) = A_{K_q,\eta}(0) > 0 ,
\end{align*}
it is necessary that
\begin{align}\label{section inequality}
A_{C,\eta}(t) &\leq A_{K_q,\eta}(t) \quad \mbox{for all} \quad t \leq 0 , \nonumber \\
A_{C,\eta}(t) &\geq A_{K_q,\eta}(t) \quad \mbox{for all} \quad t \geq 0 .
\end{align}

For convenience put $a:= -h_C(-\eta)$, $b:= h_C(\eta)$, and define
\begin{align*}
C[t] := C\cap \lbrace t\eta + \eta^\perp \rbrace , \quad t\in\R .
\end{align*}
For each $t\in (a,b]$, $C[t]$ is an $(n-1)$-dimensional convex body whose centroid within the hyperplane $\lbrace t\eta + \eta^\perp \rbrace$ is given by
\begin{align*}
g\big(C[t]\big) = \left( \frac{ 1 }{ \mbox{vol}_{n-1}\big(C[t]\big) } \int_{C[t]} x\, dx \right)
	\in \lbrace t\eta + \eta^\perp \rbrace \subset \mathbb{R}^n ,
\end{align*}
in terms of the ambient coordinates of $\mathbb{R}^n$.

Now, define the cone
\begin{align*}
K_Q := \mbox{conv} \left( -\rho_{K_q}(-\theta)\, \theta , \
	C[b] - g\big(C[b]\big) + \frac{b}{ \langle\theta,\eta\rangle } \theta \right) .
\end{align*}
The cones $K_Q$ and $C$ have the same vertex, they have the same width in the direction $\eta$, and their sections $K_Q[t]$, $C[t]$ are translates lying in the same hyperplane $\lbrace t\eta + \eta^\perp\rbrace$. Therefore, the inequalities in (\ref{section inequality}) are valid for $A_{K_Q,\eta}$ in place of $A_{C,\eta}$. We also have
\begin{align}\label{C_t-centroid}
g(K_Q[t]) &= \left( \frac{ b-t }{ b-a } \right) \big(-\rho_{K_q}(-\theta)\theta \big)
	+ \left( 1 - \frac{ b-t }{ b-a } \right) g(K_Q[b]) \nonumber \\
	&= \left( \frac{ b-t }{ b-a } \right) \big(-\rho_{K_q}(-\theta)\theta \big) + \left( \frac{ t-a }{ b-a } \right) g(K_Q[b])
\end{align}
for all $t\in[a,b]$, because $K_Q[t]$ is a dilated and translated copy of $K_Q[b]$ with
\begin{align*}
K_Q[t] = \left( \frac{ b-t }{ b-a } \right) \big( -\rho_{K_q}(-\theta)\theta \big) + \left( \frac{ t-a }{ b-a } \right) K_Q[b] .
\end{align*}

Define the $\gamma$-affine function $Q: \R^n\rightarrow [0,\infty)$ by
\begin{align*}
Q(x) = \mathcal{X}_{K_Q} (x) \, q\left( \frac{ \langle x,\eta\rangle }{ \langle \theta,\eta\rangle } \theta \right) .
\end{align*}
The support of $Q$ is $K_Q$, $Q$ is constant on the sections $K_Q[t]$, and
\begin{align}\label{Q = q on Rtheta}
Q(t\theta) = q(t\theta) \qquad \mbox{for all} \qquad t\in\R .
\end{align}

\begin{lem}\label{Centroid on line}
There is a $0<\lambda_0<1$ so that
\begin{align*}
g(Q) = \lambda_0 \big(-\rho_{K_q}(-\theta)\theta \big) + (1-\lambda_0) g(K_Q[b])
\end{align*}
\end{lem}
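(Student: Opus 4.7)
The plan is to slice $K_Q$ by hyperplanes perpendicular to $\eta$ and exploit the fact that $Q$ is constant on each section $K_Q[t]$. Since $Q(x) = \mathcal{X}_{K_Q}(x)\, q\bigl(\langle x,\eta\rangle\theta / \langle\theta,\eta\rangle\bigr)$, the value of $Q$ depends only on $t = \langle x,\eta\rangle$; denote this common value by $\tilde Q(t)$. By Fubini (writing $x = t\eta + y$ with $y \in \eta^\perp$), one has
\begin{align*}
\int_{K_Q} Q(x)\, dx &= \int_a^b \tilde Q(t)\, \mathrm{vol}_{n-1}(K_Q[t])\, dt, \\
\int_{K_Q} x\, Q(x)\, dx &= \int_a^b \tilde Q(t)\, \mathrm{vol}_{n-1}(K_Q[t])\, g(K_Q[t])\, dt,
\end{align*}
so $g(Q)$ is the weighted average of $g(K_Q[t])$ with weight $w(t) := \tilde Q(t)\, \mathrm{vol}_{n-1}(K_Q[t])\geq 0$.

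Next I substitute the expression for $g(K_Q[t])$ given in equation (\ref{C_t-centroid}), which writes each $g(K_Q[t])$ as a convex combination of the vertex $v := -\rho_{K_q}(-\theta)\theta$ and the base centroid $g(K_Q[b])$. Linearity of integration then yields
\begin{align*}
g(Q) = \lambda_0\, v + (1-\lambda_0)\, g(K_Q[b]),
\qquad
\lambda_0 := \frac{\int_a^b w(t)\,\frac{b-t}{b-a}\, dt}{\int_a^b w(t)\, dt},
\end{align*}
with $1-\lambda_0$ given by the symmetric formula using $(t-a)/(b-a)$.

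The remaining task is to check $0 < \lambda_0 < 1$. Since $\tilde Q$ is (up to an affine change of variable in $t$) a positive power of an affine function that is strictly positive on the open interval where $\tilde Q > 0$, and $\mathrm{vol}_{n-1}(K_Q[t]) > 0$ for all $t \in (a,b)$, the weight $w$ is strictly positive on a nonempty open subinterval of $(a,b)$. Because both factors $\tfrac{b-t}{b-a}$ and $\tfrac{t-a}{b-a}$ are strictly positive on $(a,b)$, each of the two numerators in the definition of $\lambda_0$ and $1-\lambda_0$ is strictly positive, giving $\lambda_0 \in (0,1)$.

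I expect no serious obstacle here; the only mildly delicate point is verifying that $w$ is not supported at a single endpoint, which follows from the explicit $\gamma$-affine form of $Q$ and the fact that $K_Q[t]$ has positive $(n-1)$-dimensional volume throughout the open interval $(a,b)$. Everything else reduces to Fubini, linearity, and the explicit formula (\ref{C_t-centroid}).
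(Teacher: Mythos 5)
Your proposal is correct and follows essentially the same route as the paper: slice $K_Q$ by hyperplanes orthogonal to $\eta$, use that $Q$ is constant on each section $K_Q[t]$ to write $g(Q)$ as a weighted average of the section centroids, and then invoke the affine representation of $g(K_Q[t])$ from (\ref{C_t-centroid}). The only cosmetic difference is that you substitute that representation directly and integrate, whereas the paper integrates by parts; an application of Fubini shows the two expressions for $\lambda_0$ coincide, and your positivity check of the weight on $(a,b)$ is sound.
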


\begin{proof}
First, note that
\begin{align*}
\int_{K_Q} x\, Q(x)\, dx = \int_a^b \int_{K_Q[t]} y\, Q(y)\, dy\, dt
	= \int_a^b q\left( \frac{ t\theta }{ \langle \theta,\eta\rangle } \right) A_{K_Q,\eta}(t) g(K_Q[t]) \, dt ,
\end{align*}
because $Q$ is constant on the sections $K_Q[t]$. Integration by parts then gives
\begin{align*}
\int_{K_Q} x\, Q(x)\, dx &= \left( \int_a^b q\left( \frac{ t\theta }{ \langle \theta,\eta\rangle } \right)
	A_{K_Q,\eta}(t) \, dt\right) g(K_Q[b]) \\
&\hspace{0.5cm}- \left( \int_a^b \int_a^t q\left( \frac{ t\theta }{ \langle \theta,\eta\rangle } \right)
	A_{K_Q,\eta}(s) \, ds\, dt \right) \left( \frac{ g(K_Q[b]) + \rho_{K_q}(-\theta)\theta }{ b-a } \right) ,
\end{align*}
where we use the representation of $g(K_Q[t])$ in (\ref{C_t-centroid}) to its derivative. Dividing both sides of the last equation by
\begin{align*}
\int_{K_Q} Q(x)\, dx = \int_a^b \int_{K_Q[t]} Q(y)\, dy\, dt
	= \int_a^b q\left( \frac{ t\theta }{ \langle \theta,\eta\rangle } \right) A_{K_Q,\eta}(t) \, dt
\end{align*}
and then rearranging the right-hand side shows
\begin{align*}
g(Q) = \frac{ \int_{K_Q} x\, Q(x)\, dx }{ \int_{K_Q} Q(x)\, dx }
	= \lambda_0 \big(-\rho_{K_q}(-\theta)\theta \big) + (1-\lambda_0) g(K_Q[b]) ,
\end{align*}
where
\begin{align*}
0< \lambda_0 :=
	\frac{ \int_a^b \int_a^t q \big( s\theta / \langle \theta,\eta\rangle \big) A_{K_Q,\eta}(s) \, ds\, dt }
	{ (b-a) \int_a^b q \big( t\theta / \langle \theta,\eta\rangle \big) A_{K_Q,\eta}(t) \, dt } < 1.
\end{align*}
\end{proof}

\begin{rem}\label{Centroid Remark}
It can be seen from the proof of Lemma \ref{Centroid on line} that any function which is
\begin{itemize}
\item integrable with a positive integral;
\item supported by a cone;
\item constant on hyperplane sections of the cone parallel to the base;
\end{itemize}
will have its centroid on the line connecting the vertex of the cone to the centroid of the base.
\end{rem}

\begin{lem}
The centroid $g(Q)$ lies on the $\theta$-axis and $\langle g(Q),\theta\rangle \geq 0$. Furthermore,
\begin{align}\label{Q ratio}
\frac{ \int_{\langle g(Q),\theta\rangle}^\infty Q(s\theta)\, ds}{\int_{-\infty}^\infty Q(s\theta)\, ds }
	\leq \frac{ \int_0^\infty Q(s\theta)\, ds}{\int_{-\infty}^\infty Q(s\theta)\, ds }
	= \frac{ \int_0^\infty q(s\theta)\, ds}{\int_{-\infty}^\infty q(s\theta)\, ds } ,
\end{align}
with equality if and only if $K_Q = K_q$ and $Q\equiv q$.
\end{lem}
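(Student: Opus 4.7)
The plan is to exploit the one-variable parametrization of $Q$ and $q$ by the $\eta$-coordinate. Since $q$ is $\gamma$-affine with level sets orthogonal to $\eta$, there is a function $p:\R\to[0,\infty)$ so that $q(x)=p(\langle x,\eta\rangle)$ on $K_q$. A direct check using (\ref{Q = q on Rtheta}) and the defining formula for $Q$ gives $Q(x)=p(\langle x,\eta\rangle)$ on $K_Q$ as well. Hence $Q$ is constant on sections of $K_Q$ parallel to the base, and Remark \ref{Centroid Remark} places $g(Q)$ on the line joining the apex $-\rho_{K_q}(-\theta)\theta$ to $g(K_Q[b])$. The translation built into the definition of $K_Q$ ensures $g(K_Q[b])=(b/\langle\theta,\eta\rangle)\theta$, so this line is the $\theta$-axis and $g(Q)$ lies on it.

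To see that $\langle g(Q),\theta\rangle\geq 0$, I use the explicit formula $\langle g(K_Q[t]),\theta\rangle=t/\langle\theta,\eta\rangle$, which follows from (\ref{C_t-centroid}) together with $a=-\rho_{K_q}(-\theta)\langle\theta,\eta\rangle$. Integrating in $\eta$-slices,
\begin{equation*}
\int_{K_Q}\langle x,\theta\rangle\,Q(x)\,dx=\frac{1}{\langle\theta,\eta\rangle}\int_a^b t\,p(t)\,A_{K_Q,\eta}(t)\,dt .
\end{equation*}
The analogous moment for $q$ is $\int_{K_q}\langle x,\eta\rangle\,q(x)\,dx=\int_{-h_{K_q}(-\eta)}^{b}t\,p(t)\,A_{K_q,\eta}(t)\,dt$, and this vanishes because $g(q)=o$. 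Split the latter integral at $t=a$ and $t=0$ and compare termwise with the former via (\ref{section inequality}): on $[a,0]\cup[0,b]$ the integrand $t\,p(t)\,[A_{K_Q,\eta}(t)-A_{K_q,\eta}(t)]$ is pointwise $\geq 0$ because $t$ and the bracket change sign together, while on $[-h_{K_q}(-\eta),a]$ the leftover integrand $t\,p(t)\,A_{K_q,\eta}(t)$ is $\leq 0$. Summing these sign observations yields $\int_a^b t\,p(t)\,A_{K_Q,\eta}(t)\,dt\geq 0$, and dividing by $\langle\theta,\eta\rangle>0$ and by $\int Q\,dx>0$ gives $\langle g(Q),\theta\rangle\geq 0$. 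Inequality (\ref{Q ratio}) is then immediate: the first inequality just shortens the interval of positive integration because $Q\geq 0$, and the middle equality follows from (\ref{Q = q on Rtheta}).

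The equality analysis is the step I expect to be the main obstacle. Equality in (\ref{Q ratio}) forces $\int_0^{\langle g(Q),\theta\rangle}Q(s\theta)\,ds=0$; since $q(0)>0$ (because $o\in\inte K_q$) and $Q(0)=q(0)$, continuity of $Q$ on a neighbourhood of the origin along $\R\theta$ forces $\langle g(Q),\theta\rangle=0$. Tracing back through the sign comparison above, each inequality must be saturated: the leftover interval $[-h_{K_q}(-\eta),a]$ must be trivial, giving $a=-h_{K_q}(-\eta)$, and $A_{K_Q,\eta}(t)=A_{K_q,\eta}(t)$ for every $t\in[a,b]$ (using $p>0$ on the interior of the support). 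Since $K_Q$ is a cone with apex at $t=a$, the function $A_{K_Q,\eta}^{1/(n-1)}$ is affine on $[a,b]$ and vanishes at $t=a$; the same is therefore true for $A_{K_q,\eta}^{1/(n-1)}$, and the equality case of the Brunn--Minkowski inequality for parallel sections forces $K_q$ to be a cone with base $K_q[b]$ and a single-point apex at $\eta$-height $a$. That apex must then be $-\rho_{K_q}(-\theta)\theta$, the unique point of $K_q$ at height $a$ on the $\theta$-axis. Finally, applying Remark \ref{Centroid Remark} to $q$ on the cone $K_q$ together with $g(q)=o$ places $g(K_q[b])$ on the $\theta$-axis, forcing $g(K_q[b])=(b/\langle\theta,\eta\rangle)\theta=g(K_Q[b])$; hence $K_q[b]=K_Q[b]$, so $K_q=K_Q$ and $q\equiv Q$.
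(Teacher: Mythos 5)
Your proposal is correct and follows essentially the same route as the paper: locate $g(Q)$ on the $\theta$-axis via the cone/constant-on-sections observation, prove $\langle g(Q),\theta\rangle\geq 0$ by comparing the $\eta$-moments of $Q$ and $q$ slice by slice using (\ref{section inequality}) and $g(q)=o$, and in the equality case force $a=-h_{K_q}(-\eta)$ and $A_{K_Q,\eta}\equiv A_{K_q,\eta}$, whence $K_q=C=K_Q$ by the Brunn--Minkowski equality condition. The only differences are cosmetic (you track the $\theta$-coordinate of the section centroids rather than $\langle g(Q),\eta\rangle$, and you phrase the moment comparison as a pointwise termwise bound instead of the paper's two displayed inequalities), so no further comment is needed.
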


\begin{proof}
Both the vertex of $K_Q$ and the centroid $g(K_Q[b])$ lie on the $\theta$-axis, so $g(Q) = t_0\theta$ for some $t_0\in\R$ by Lemma \ref{Centroid on line}. We have
\begin{align}\label{Q1}
0 = \int_{K_q} \langle x,\eta\rangle q(x)\, dx &= \int_{-\infty}^\infty \int_{ K_q[t] } \langle y,\eta\rangle q(y)\, dy\, dt \\
	&\leq \int_a^\infty \int_{ K_q[t] } \langle y,\eta\rangle q(y)\, dy\, dt
	= \int_a^\infty t\, q\left( \frac{ t\theta }{ \langle \theta,\eta\rangle } \right) A_{K_q,\eta}(t)\, dt , \nonumber
\end{align}
because $g(q) = o$, $-h_{K_q}(-\eta) \leq a := -h_C(-\eta) <0$, and $q$ has the constant value $q\big( t\theta / \langle \theta,\eta\rangle \big)$ on the section $K_q[t]$. Similarly,
\begin{align*}
\int_{K_Q} \langle x,\eta\rangle Q(x)\, dx
	= \int_a^\infty \int_{K_Q[t] } \langle y,\eta\rangle Q(y)\, dy\, dt
	= \int_a^\infty t\, q\left( \frac{ t\theta }{ \langle \theta,\eta\rangle } \right) A_{K_Q,\eta}(t)\, dt ,
\end{align*}
because $Q$ has the constant value $q\big( t\theta / \langle \theta,\eta\rangle \big)$ on the section $K_Q[t]$. Therefore,
\begin{align}\label{Q2}
&\int_{K_Q} \langle x,\eta\rangle Q(x)\, dx \nonumber \\
&\geq \int_a^\infty t\, q\left( \frac{ t\theta }{ \langle \theta,\eta\rangle } \right)
	\Big( A_{K_Q,\eta}(t) - A_{K_q,\eta}(t) \Big) \, dt \nonumber \\
&= \int_a^0 t\, q\left( \frac{ t\theta }{ \langle \theta,\eta\rangle } \right)
	\Big( A_{K_Q,\eta}(t) - A_{K_q,\eta}(t) \Big) \, dt \nonumber \\
&\hspace{3cm} + \int_0^\infty t\, q\left( \frac{ t\theta }{ \langle \theta,\eta\rangle } \right)
	\Big( A_{K_Q,\eta}(t) - A_{K_q,\eta}(t) \Big) \, dt \nonumber \\
&\geq 0 ,
\end{align}
using inequality (\ref{section inequality}) and the fact that $A_{K_Q,\eta}(t) = A_{C,\eta}(t)$. This shows
\begin{align*}
0\leq \langle g(Q),\eta\rangle = t_0 \langle \theta,\eta\rangle ,
\end{align*}
which then implies $t_0\geq 0$ because $\langle\theta,\eta\rangle\geq 0$. That is, $\langle g(Q),\theta\rangle \geq 0$. We get (\ref{Q ratio}) from $\langle g(Q),\theta\rangle \geq 0$ and (\ref{Q = q on Rtheta}).

Suppose there is equality in (\ref{Q ratio}). Necessarily $\langle g(Q),\theta\rangle = 0$, so there must also be equality in (\ref{Q2}) and (\ref{Q1}). Therefore, $-a = h_{K_Q}(-\eta) = h_{K_q}(-\eta)$ and $A_{K_Q,\eta} \equiv A_{K_q,\eta}$. This means $A_{K_q,\eta}$ is $\gamma$-affine and increasing from zero on $[a,b]$, which is only possible if $K_q$ is a cone with vertex $-\rho_{K_q}(-\theta)\theta$ and base $K_q[b] = K_q\cap \lbrace h_{K_q}(\eta)\eta +\eta^\perp\rbrace$. Recalling the construction of the cone $C$, we see that $C = K_q$. Because the centroid $g(q)$ and the vertex of $K_q$ are on the $\theta$-axis, Remark \ref{Centroid Remark} implies $g(K_q[b]) = g(C[b])$ is also on the $\theta$-axis. The choice of vertex and base for $K_Q$ now implies $K_Q = C = K_q$. Since for each $c>0$, $\{q(x)=c\}$ and $\{Q(x)=c\}$ lie in the same translate of $\eta^{\perp}$ and the supports of both functions coincide, we must have $Q \equiv q$.
\end{proof}

\begin{rem}
By applying the argument in this subsection to the function $q(x) = F(x+g(F))$ (where $F$ is defined in (\ref{F})), we can conclude 
$$
\frac{\int_0^{\infty} f(s \theta) ds}{\int_{-\infty}^{\infty} f(s \theta) ds} \geq \frac{\int_{\langle g(F), \theta\rangle}^{\infty} F(s \theta) ds}{\int_{-\infty}^{\infty} F(s \theta) ds}\geq \frac{\int_{\langle g(Q), \theta\rangle}^{\infty} Q(s \theta) ds}{\int_{-\infty}^{\infty} Q(s \theta) ds}.
$$
\end{rem}

\subsection{Equality case}

We will evaluate the last of the integrals in the previous remark. Fix any unit vector $\xi\in S^{n-1}$ with $\langle \theta,\xi\rangle > 0$. Consider any $n$-dimensional cone
\begin{align*}
K_T = \mbox{conv} \big( r_0\theta, \ r_1\theta + D \big) ,
\end{align*}
where $r_0,\, r_1\in\R$ with $r_0 < r_1$, and $D$ is an $(n-1)$-dimensional convex body in $\xi^\perp$ with $g(D)$ at the origin. Let $T:\R^n\rightarrow [0,\infty)$ be any $\gamma$-affine function having the form
\begin{align*}
T(x) = m \mathcal{X}_{K_T}(x) \big( - \langle x,\xi\rangle + r_1 \langle \theta,\xi\rangle \big)^\frac{1}{\gamma} ,
\end{align*}
where $m>0$ is a constant. We now determine the coordinates of $g(T)$. Compute
\begin{align*}
&\int_{K_T} \langle x,\xi\rangle T(x)\, dx \\
&= \int_{r_0\langle \theta,\xi\rangle}^{r_1\langle \theta,\xi\rangle}
	s\cdot m \cdot ( -s + r_1 \langle \theta,\xi\rangle)^\frac{1}{\gamma} A_{K_T,\xi}(s) \, ds \\
&= \int_{r_0\langle \theta,\xi\rangle}^{r_1\langle \theta,\xi\rangle} s\cdot m \cdot ( -s + r_1 \langle \theta,\xi\rangle)^\frac{1}{\gamma}
	\left( \frac{ s - r_0\langle\theta,\xi\rangle }{ (r_1-r_0) \langle\theta,\xi\rangle } \right)^{n-1}
	\mbox{vol}_{n-1}(D)\, ds \\
&= m \Big( (r_1-r_0)\langle\theta,\xi\rangle \Big)^{\frac{1}{\gamma}+2} \mbox{vol}_{n-1}(D)
	\int_0^1 t^n (1-t)^\frac{1}{\gamma}\, dt \\
&\hspace{2cm} + m r_0 \langle \theta,\xi\rangle \Big( (r_1-r_0)\langle\theta,\xi\rangle \Big)^{\frac{1}{\gamma}+1}
	\mbox{vol}_{n-1}(D) \int_0^1 t^{n-1} (1-t)^\frac{1}{\gamma}\, dt
\end{align*}
and
\begin{align*}
&\int_{K_T} T(x)\, dx \\
&= \int_{r_0\langle \theta,\xi\rangle}^{r_1\langle \theta,\xi\rangle} m
	( -s + r_1 \langle \theta,\xi\rangle)^\frac{1}{\gamma}
	\left( \frac{ s - r_0\langle\theta,\xi\rangle }{ (r_1-r_0) \langle\theta,\xi\rangle } \right)^{n-1}
	\mbox{vol}_{n-1}(D)\, ds \\
&= m \Big( (r_1-r_0)\langle\theta,\xi\rangle \Big)^{\frac{1}{\gamma}+1}
	\mbox{vol}_{n-1}(D) \int_0^1 t^{n-1} (1-t)^\frac{1}{\gamma}\, dt 
\end{align*}
using the change of variables $t = \frac{ s - r_0\langle\theta,\xi\rangle }{ (r_1-r_0) \langle\theta,\xi\rangle }$. Combining these calculations gives
\begin{align*}
\langle g(T),\xi\rangle = \frac{ \int_{K_T} \langle x,\xi\rangle T(x)\, dx }{ \int_{K_T} T(x)\, dx }
	&= (r_1-r_0)\langle\theta,\xi\rangle
	\left( \frac{ \int_0^1 t^n (1-t)^\frac{1}{\gamma}\, dt }{ \int_0^1 t^{n-1} (1-t)^\frac{1}{\gamma}\, dt } \right)
	+ r_0 \langle \theta,\xi\rangle \\
&= (r_1-r_0)\langle\theta,\xi\rangle \left( \frac{ n\gamma }{ (n+1)\gamma + 1 } \right) + r_0 \langle \theta,\xi\rangle,
\end{align*}
where we use the fact that for the Gamma function $\Gamma(z)$ one has 
\begin{align*}
\int_0^1 t^{u-1}(1-t)^{v-1}dt = \frac{\Gamma(u) \Gamma(v)}{\Gamma(u+v)} 
	\qquad \mbox{for all} \qquad u>0, \ v>0. 
\end{align*} 
Both the vertex of $K_T$ and the centroid of its base are on the $\theta$-axis, so
\begin{align*}
g(T) = \left[ (r_1-r_0) \left( \frac{ n\gamma }{ (n+1)\gamma + 1 } \right) + r_0 \right]\theta
	= \left[ \frac{ n\gamma r_1 + (\gamma +1) r_0 }{ (n+1)\gamma + 1 } \right] \theta \in \R\theta
\end{align*}
by Remark \ref{Centroid Remark}. Note that the centroid $g(T)$ will be at the origin if and only if
\begin{align*}
r_0 = - \left( \frac{ n\gamma }{ \gamma + 1 } \right) r_1 .
\end{align*}
Finally, calculate
\begin{align*}
\int_{\langle g(T),\theta\rangle}^\infty T(s\theta)\, ds
	&= \int_{\langle g(T),\theta\rangle}^{r_1}
	\Big( -s\langle \theta,\xi\rangle + r_1 \langle \theta,\xi\rangle\Big)^\frac{1}{\gamma} ds \\
&= \langle\theta,\xi\rangle^\frac{1}{\gamma} \left(\frac{\gamma}{\gamma +1}\right)
	(r_1 - r_0)^\frac{\gamma + 1}{\gamma} \left( \frac{\gamma +1}{ n\gamma + \gamma + 1 } \right)^\frac{\gamma +1}{\gamma}
\end{align*}
and
\begin{align*}
\int_{-\infty}^\infty T(s\theta)\, ds
	= \int_{r_0}^{r_1} \Big( -s\langle \theta,\xi\rangle + r_1 \langle \theta,\xi\rangle\Big)^\frac{1}{\gamma} ds
	= \langle\theta,\xi\rangle^\frac{1}{\gamma} \left(\frac{\gamma}{\gamma +1}\right) (r_1 - r_0 )^\frac{\gamma +1}{\gamma}
\end{align*}
to see that
\begin{align*}
\frac{ \int_{\langle g(T),\theta\rangle}^\infty T(s\theta)\, ds }{ \int_{-\infty}^\infty T(s\theta)\, ds }
	= \left( \frac{\gamma +1}{ n\gamma + \gamma + 1 } \right)^\frac{\gamma +1}{\gamma} .
\end{align*}

This concludes the proof of Theorem \ref{main}.

\section{ $k$-Dimensional Sections of $\gamma$-Concave Functions }

Recall $\theta^+ := \lbrace x\in\R^n :\, \langle x,\theta\rangle\geq 0\rbrace$ for $\theta\in S^{n-1}$. We have the following generalization:

\begin{cor}\label{general prob}
Fix a $k$-dimensional subspace $E$ of $\mathbb{R}^n$, $\theta\in E\cap S^{n-1}$, and $\gamma\in (0,\infty)$. Let $f:\mathbb{R}^n\rightarrow [0,\infty)$ be a $\gamma$-concave function with $0<\int_{\R^n} f(x)\, dx<\infty$ and centroid at the origin. Then 
\begin{align*}
\frac{ \int_{E \cap \theta^+} f(x) dx }{ \int_E f(x) dx } 
	\geq \left(\frac{k \gamma+1}{(n+1) \gamma+1}\right)^{\frac{k \gamma+1}{\gamma}} .
\end{align*}
There is equality when 
\begin{itemize}
\item $f(x) = \mathcal{X}_{K_f}(x) \Big( - \langle x, \theta\rangle + 1 \Big)^\frac{1}{\gamma}$; 
\item $K_f = \mbox{conv} \left( - (n-k+1) \left( \frac{ \gamma }{\gamma +1}\right) \theta + \delta B_2^{k-1} , \ \theta + B_2^{n-k} \right)$ where $B_2^{n-k}$ is the centred Euclidean ball of unit radius in $E^\perp$, $B_2^{k-1}$ is the centred Euclidean ball of unit radius in $\widetilde{E}^\perp = E\cap\theta^\perp$, and 
\begin{align*}
\delta = \left( (n-k+1) \left( \frac{ \gamma }{\gamma +1}\right) + 1 \right) 
	\Big[ \mbox{vol}_{k-1} \big( B_2^{k-1} \big) \Big]^\frac{-1}{k-1} . 
\end{align*}
\end{itemize}
\end{cor}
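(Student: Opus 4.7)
The plan is to reduce to the one-dimensional Theorem \ref{main} by replacing $f$ with a suitable partial marginal $\tilde\phi$ that is $\gamma'$-concave for an appropriate $\gamma' > 0$, has centroid at the origin of a reduced ambient space, and whose restriction to the line $\R\theta$ reproduces exactly the ratio we wish to bound. Set $\tilde{E} := \R\theta \oplus E^\perp$, which has dimension $n-k+1$; since $\theta \in E$ we have $E^\perp \subseteq \theta^\perp$, so $\R^n = \tilde{E} \oplus (E \cap \theta^\perp)$ is an orthogonal decomposition with the second summand of dimension $k-1$.

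I would then define $\tilde\phi : \tilde{E} \to [0,\infty)$ by
\begin{align*}
\tilde\phi(y) := \int_{E \cap \theta^\perp} f(y + w)\, dw.
\end{align*}
The Borell-Brascamp-Lieb inequality shows $\tilde\phi$ is $\gamma'$-concave on $\tilde{E} \cong \R^{n-k+1}$ with $\gamma' = \gamma / \big((k-1)\gamma + 1\big) > 0$, and Fubini gives $\int_{\tilde{E}} \tilde\phi = \int_{\R^n} f \in (0,\infty)$. Denoting the orthogonal projection onto $\tilde{E}$ by $P_{\tilde{E}}$, Fubini and the centroid hypothesis yield
\begin{align*}
\int_{\tilde{E}} y\, \tilde\phi(y)\, dy = \int_{\R^n} P_{\tilde{E}}(x)\, f(x)\, dx = P_{\tilde{E}}\left( \int_{\R^n} x f(x)\, dx \right) = o,
\end{align*}
so $\tilde\phi$ has centroid at the origin of $\tilde{E}$. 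A second application of Fubini produces the slice identities $\int_0^\infty \tilde\phi(s\theta)\, ds = \int_{E \cap \theta^+} f\, dx$ and $\int_{-\infty}^\infty \tilde\phi(s\theta)\, ds = \int_E f\, dx$.

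Next I would apply Theorem \ref{main} to $\tilde\phi$ with ambient dimension $n-k+1$, parameter $\gamma'$, and unit vector $\theta \in \tilde{E} \cap S^{n-1}$, obtaining the lower bound
\begin{align*}
\left( \frac{\gamma' + 1}{(n-k+1)\gamma' + \gamma' + 1} \right)^{\frac{\gamma' + 1}{\gamma'}}.
\end{align*}
Substituting $\gamma' = \gamma / \big((k-1)\gamma + 1\big)$ and simplifying, the base collapses to $(k\gamma + 1)/\big((n+1)\gamma + 1\big)$ and the exponent to $(k\gamma + 1)/\gamma$, which is exactly the claimed bound.

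For the equality case, I would verify directly that the prescribed $f$ has centroid at the origin — a short computation that pins down precisely the given value of $\delta$ — and that the marginal $\tilde\phi$ produced by the construction above is $\gamma'$-affine and supported on a cone of exactly the shape required by the equality description in Theorem \ref{main}. This uses that the level sets of $f$ are cylindrical over $\delta B_2^{k-1}$ in the transverse $(k-1)$-dimensional directions, which saturates Borell-Brascamp-Lieb. The main obstacle I anticipate is purely bookkeeping: tracking the ball-volume and Beta-function factors that arise from integrating over $\delta B_2^{k-1}$ and $B_2^{n-k}$ to confirm the stated value of $\delta$ and to match the equality cone prescribed in Theorem \ref{main}; the conceptual core — the orthogonality of $\tilde{E}$ to $E \cap \theta^\perp$ which places the centroid of $\tilde\phi$ at the origin — comes essentially for free.
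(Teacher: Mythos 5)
Your proposal is correct and follows essentially the same route as the paper: the same marginal $F(y)=\int_{E\cap\theta^\perp}f(y+w)\,dw$ over $\widetilde{E}=\mathrm{span}\{\theta,E^\perp\}$, the same Borell--Brascamp--Lieb step giving $\widetilde{\gamma}=\gamma/((k-1)\gamma+1)$-concavity, the same centroid computation, and the same application of Theorem \ref{main} in dimension $n-k+1$ followed by the algebraic simplification. The equality verification you sketch (checking the prescribed $f$ yields a $\widetilde{\gamma}$-affine marginal supported on the equality cone of Theorem \ref{main}) is also exactly what the paper does.
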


\begin{proof}
Put $\widetilde{E} = \mbox{span} \{ E^{\perp}, \theta\}$, and define the function $F: \widetilde{E} \to [0,\infty)$ by
\begin{align*}
F(y) := \int_{\widetilde{E}^{\perp}} f(z + y)\, dz.
\end{align*}
We claim $F$ is a $\widetilde{\gamma}:= \frac{\gamma}{(k-1) \gamma + 1}$-concave function on the $d:= (n-k+1)$-dimensional space $\widetilde{E}$. Fix any $y_1,\, y_2\in\widetilde{E}$ with $F(y_1)\cdot F(y_2)\neq 0$ and $0<\lambda<1$. The $\gamma$-concavity of $f$ allows us to apply the Borell-Brascamp-Lieb inequality to the functions 
\begin{align*}
z \mapsto f\big( z + \lambda y_1 + (1-\lambda) y_2 \big) , 
	\quad z \mapsto f( z + y_1 ) , 
	\quad z \mapsto f( z + y_2 ) 
\end{align*}
on $\widetilde{E}^\perp\in G(n,k-1)$ to get 
\begin{align*}
&F \big(\lambda y_1 + (1-\lambda) y_2 \big) = \int_{\widetilde{E}^{\perp}} f\big(z +\lambda y_1 + (1-\lambda) y_2 \big)\, dz \\
&\geq \left(\lambda \left(\int_{\widetilde{E}^{\perp}} f(z+y_1)\, dz\right)^{\frac{\gamma}{(k-1) \gamma + 1}} 
	+ (1-\lambda) \left(\int_{\widetilde{E}^{\perp}} f(z+y_2)\, dz\right)^{\frac{\gamma}{(k-1) \gamma + 1}} 
	\right)^{\frac{(k-1)\gamma+1}{\gamma}} \\
&= \left(\lambda F^{\frac{\gamma}{(k-1) \gamma + 1}}(y_1) 
	+ (1 - \lambda) F^{\frac{\gamma}{(k-1) \gamma + 1}}(y_2) \right)^{\frac{(k-1)\gamma+1}{\gamma}}.
\end{align*}
Observe that $g(F) = o$. Indeed,
\begin{align*}
\int_{\widetilde{E}} y\, F(y)\, dy &= \int_{\widetilde{E}} y\, \int_{\widetilde{E}^{\perp}} f(z+y)\,dz\, dy \\
&= \int_{\R^n} \big( x|\widetilde{E} \big) f(x) \, dx 
	= \left( \int_{\R^n} x \, f(x) \, dx \right) \bigg| \widetilde{E} = o .
\end{align*}
By Theorem \ref{main}, we have 
\begin{align*}
\frac{ \int_{E \cap \theta^+} f(x)\, dx  }{\int_E f(x)\, dx } 
	= \frac{\int_0^{\infty} F(s \theta)\, ds}{\int_{-\infty}^{\infty} F(s \theta)\, ds} 
	\geq \left(\frac{\widetilde{\gamma}+1}{(d+1) \widetilde{\gamma} +1}\right)^{(\widetilde{\gamma}+1)/\widetilde{\gamma}} 
	= \left(\frac{k \gamma+1}{(n+1) \gamma+1}\right)^{\frac{k \gamma+1}{\gamma}} .
\end{align*}

Assume $f(x) = \mathcal{X}_{K_f}(x) \Big( - \langle x, \theta\rangle + 1 \Big)^\frac{1}{\gamma}$ and 
\begin{align*}
K_f = \mbox{conv} \left( - (n-k+1) \left( \frac{ \gamma }{\gamma +1}\right) \theta + \delta B_2^{k-1} , 
	\ \theta + B_2^{n-k} \right) . 
\end{align*}
Let $y$ be any point lying in the $(n-k+1)$-dimensional cone 
\begin{align*}
K_f|\widetilde{E} = \mbox{conv} \left( - (n-k+1) \left( \frac{ \gamma }{\gamma +1}\right) \theta ,
	\ \theta + B_2^{n-k} \right) 
\end{align*}
in $\widetilde{E}$. There is a point $v_1$ in the base $\theta + B_2^{n-k}$ of $K_f|\widetilde{E}$ so that $y$ lies on the line segment connecting $v_1$ to the vertex $v_0 := - (n-k+1) \left( \frac{\gamma}{\gamma +1}\right) \theta$ of $K_f|\widetilde{E}$. Then  
\begin{align*}
K_f \cap \mbox{aff} \left( v_0 + \widetilde{E}^\perp, \ v_1 \right) = \mbox{conv} \left( v_0 + \delta B_2^{k-1} , \ v_1 \right),
\end{align*}
and so 
\begin{align*}
\mbox{vol}_{k-1} \left( K_f \cap \left\lbrace y + \widetilde{E}^\perp \right\rbrace \right) 
	&= \mbox{vol}_{k-1} \left( \mbox{conv} \left( v_0 + \delta B_2^{k-1} , \ v_1 \right) 
	\cap \left\lbrace y + \widetilde{E}^\perp \right\rbrace \right) \\
&= \left( \frac{ \langle v_1 - y,\theta\rangle }{ \langle v_1 - v_0,\theta\rangle } \right)^{k-1} 
	\mbox{vol}_{k-1} \big( \delta B_2^{k-1} \big) \\ 
&= \Big( -\langle y, \theta \rangle + 1 \Big)^{k-1} .
\end{align*} 
The function $F: \widetilde{E}\rightarrow [0,\infty)$, defined by
\begin{align*}
F(y) := \int_{\widetilde{E}^\perp} f(z+y) \, dz 
	&= \int_{\widetilde{E}^\perp} \mathcal{X}_{K_f}(z+y) \Big( - \langle z + y, \theta\rangle + 1 \Big)^\frac{1}{\gamma} dz \\
&= \Big( - \langle y, \theta\rangle + 1 \Big)^\frac{1}{\gamma} \int_{\widetilde{E}^\perp} \mathcal{X}_{K_f}(z+y) \, dz \\
&= \Big( - \langle y, \theta\rangle + 1 \Big)^\frac{1}{\gamma} 
	\mbox{vol}_{k-1} \left( K_f \cap \left\lbrace y + \widetilde{E}^\perp \right\rbrace \right) \\
&= \Big( - \langle y, \theta\rangle + 1 \Big)^\frac{1}{\gamma} 
	\mathcal{X}_{K_f|\widetilde{E}}(y) \Big( - \langle y, \theta\rangle + 1 \Big)^{k-1} \\
&= \mathcal{X}_{K_f|\widetilde{E}}(y) \Big( - \langle y, \theta\rangle + 1 \Big)^\frac{ (k-1)\gamma + 1 }{\gamma} ,
\end{align*}
is then $\widetilde{\gamma}$-affine with support 
\begin{align*}
K_f|\widetilde{E} = \mbox{conv} \left( - (n-k+1) \left( \frac{ \gamma }{\gamma +1}\right) \theta , \ \theta + B_2^{n-k} \right) .
\end{align*}
The centroid of $f$ must lie in $\widetilde{E}$, because $f$ is symmetric with respect to $\widetilde{E}$. Also notice that $F$ satisfies the equality conditions of Theorem \ref{main} in dimension $n-k+1$ for $\theta = \xi$ and $r=1$. Therefore, the centroids of $F$ and $f$ are at the origin, and 
\begin{align*}
\frac{ \int_{E \cap \theta^+} f(x)\, dx  }{\int_E f(x)\, dx }
	= \frac{ \int_0^\infty F(s\theta)\, ds }{ \int_{-\infty}^\infty F(s\theta)\, ds } 
	= \left(\frac{k \gamma+1}{(n+1) \gamma+1}\right)^{\frac{k \gamma+1}{\gamma}}. 
\end{align*}
\end{proof}

\section{ Sections of Convex Bodies }

We have the following corollary to Theorem \ref{main}:

\begin{cor}\label{Grunbaum Ineq Sec Thm}
Fix a $k$-dimensional subspace $E$ of $\R^n$, and $\theta\in E\cap S^{n-1}$. Let $\widetilde{E}$ be the $(n-k+1)$-dimensional subspace spanned by $\theta$ and $E^\perp$. Let $K$ be a convex body in $\R^n$ with $g(K)\in \widetilde{E}^\perp = E\cap\theta^\perp$. Then
\begin{align*}
\frac{ \mbox{vol}_k (K\cap E\cap\theta^+) }{ \mbox{vol}_k (K\cap E) } \geq \left( \frac{k}{n+1} \right)^k .
\end{align*}
There is equality if and only if
\begin{align*}
K = \mbox{conv}\left( -\left( \frac{ n-k+1 }{ k } \right) z + D_0,\, z + D_1\right),
\end{align*}
where
\begin{itemize}
\item $z\in E$ with $\langle z,\theta\rangle > 0$;
\item $D_0$ is a $(k-1)$-dimensional convex body in $\widetilde{E}^\perp$;
\item $D_1$ is an $(n-k)$-dimensional convex body in an $(n-k)$-dimensional subspace $F\subset\R^n$ for which $\R^n = \mbox{span}(E,F)$, and $g(D_1)$ is at the origin (see Figure \ref{equal1}).
\end{itemize}
\end{cor}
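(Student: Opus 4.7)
The plan is to reduce the corollary to Theorem~\ref{main} by marginalizing the characteristic function of $K$ along $\widetilde{E}^\perp$. Define
\[
F(y) := \mbox{vol}_{k-1}\big( K \cap (y+\widetilde{E}^\perp) \big) = \int_{\widetilde{E}^\perp}\mathcal{X}_K(y+z)\,dz, \qquad y \in \widetilde{E}.
\]
By the Brunn--Minkowski inequality for $(k-1)$-dimensional parallel sections, $F^{1/(k-1)}$ is concave on its support, so $F$ is $\widetilde{\gamma}$-concave on $\widetilde{E}$ with $\widetilde{\gamma} := 1/(k-1)$. Moreover, Fubini's theorem together with $g(K)\in\widetilde{E}^\perp$ gives
\[
\int_{\widetilde{E}} y\, F(y)\, dy = \int_K (x|\widetilde{E})\, dx = \mbox{vol}_n(K)\cdot \big( g(K)|\widetilde{E} \big) = 0,
\]
so $F$ has centroid at the origin of $\widetilde{E}$.

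Applying Theorem~\ref{main} to $F$ in the ambient space $\widetilde{E}$ of dimension $d := n-k+1$, with $\theta\in\widetilde{E}\cap S^{d-1}$, yields
\[
\frac{\int_0^\infty F(s\theta)\, ds}{\int_{-\infty}^\infty F(s\theta)\, ds}
    \geq \left( \frac{\widetilde{\gamma}+1}{(d+1)\widetilde{\gamma}+1} \right)^{(\widetilde{\gamma}+1)/\widetilde{\gamma}}
    = \left( \frac{k}{n+1} \right)^k,
\]
using $\widetilde{\gamma}+1 = k/(k-1)$, $(d+1)\widetilde{\gamma}+1 = (n+1)/(k-1)$, and $(\widetilde{\gamma}+1)/\widetilde{\gamma}=k$. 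Since $E = \mbox{span}(\theta)\oplus\widetilde{E}^\perp$, Fubini rewrites the left-hand side as $\mbox{vol}_k(K\cap E\cap \theta^+)/\mbox{vol}_k(K\cap E)$, proving the inequality. The degenerate case $k=1$ (where $\widetilde{\gamma}=\infty$ lies outside the range of Theorem~\ref{main}) can be recovered by applying Theorem~\ref{main} to $\mathcal{X}_K$ for any $\gamma>0$ and letting $\gamma\to\infty$, or by invoking the classical bound~(\ref{MinkRad ineq}).

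For the equality case, equality forces equality in Theorem~\ref{main} applied to $F$, so $F$ is $\widetilde{\gamma}$-affine with support a cone $\mbox{conv}\big( -\tfrac{n-k+1}{k}r\theta,\, r\theta+D\big)\subset\widetilde{E}$. The affinity of $F^{1/(k-1)}$ on this support is precisely the equality case of Brunn--Minkowski, which forces the sections $K\cap(y+\widetilde{E}^\perp)$ to be homothetic copies of a fixed $(k-1)$-dimensional body $D_0\subset\widetilde{E}^\perp$ with scaling factor linear in $y$; convexity of $K$ in turn forces the homothety centers to vary affinely in $y$. Combining these constraints globally identifies $K$ as the convex hull of an apex slice $-\tfrac{n-k+1}{k}z+D_0$ and a base slice $z+D_1$, with $D_1$ an $(n-k)$-dimensional body in a subspace transverse to $E$ and with $g(D_1)=o$, matching the claim. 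The converse (that any such $K$ attains equality) is verified by a direct computation analogous to the one at the end of the proof of Corollary~\ref{general prob}.

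I expect the main obstacle to be this last step: translating the equality information for the marginal $F$ on $\widetilde{E}$ back into the global cone structure of $K$ in $\R^n$. Affinity of the section-volume function constrains only the sizes of the slices $K\cap(y+\widetilde{E}^\perp)$; one must simultaneously track the affine behaviour of the translational position of each slice, using convexity of $K$ together with the specific cone support of $F$, to pin down the base $D_1$ and the $(n-k)$-dimensional subspace in which it lives.
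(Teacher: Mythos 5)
Your proposal is correct and follows essentially the same route as the paper: marginalize $\mathcal{X}_K$ over $\widetilde{E}^\perp$ to get the $1/(k-1)$-concave section function on $\widetilde{E}$, verify its centroid is at the origin, apply Theorem~\ref{main} in dimension $d=n-k+1$, and translate its equality conditions (via equality in Brunn--Minkowski forcing homothetic sections) back to the cone structure of $K$. Your treatment is, if anything, slightly more careful than the paper's, since you explicitly handle the degenerate case $k=1$ and spell out the section-homothety argument that the paper compresses into one sentence.
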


\begin{figure}[ht]
  \centering
  \includegraphics[scale=0.35]{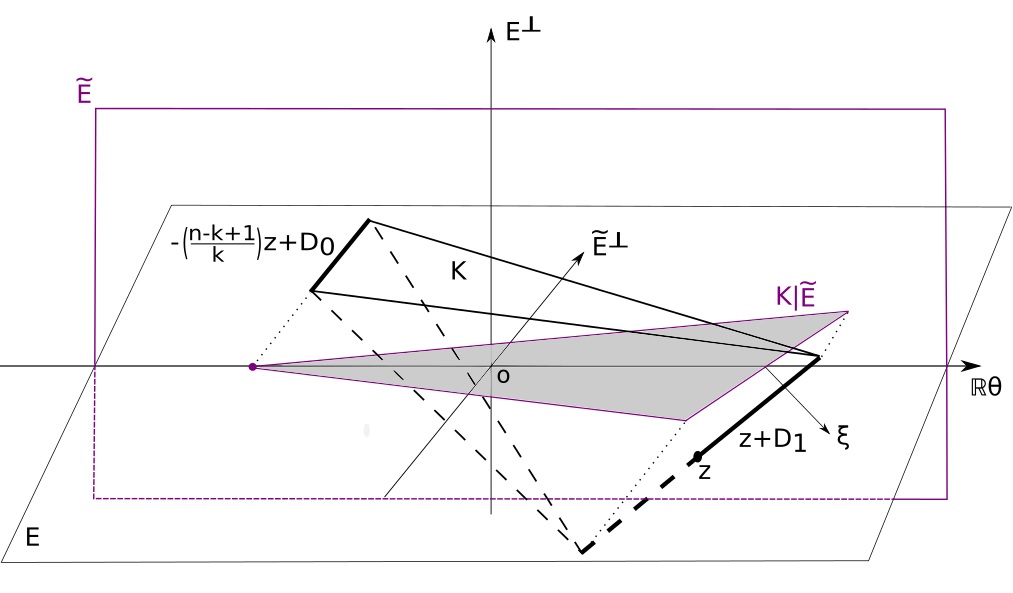}\\
  \caption{The equality case.}\label{equal1}
\end{figure}

\begin{proof}
Define the section function
\begin{align*}
A_{K,\widetilde{E}} (y) := \mbox{vol}_{k-1} \left( K\cap \lbrace y + \widetilde{E}^\perp \rbrace \right), \quad y \in \widetilde{E}.
\end{align*}
It follows from the Brunn-Minkowski inequality that $A_{K,\widetilde{E}}:\widetilde{E}\rightarrow [0,\infty)$ is a $\gamma:= 1/(k-1)$-concave function on the $d := (n-k+1)$-dimensional space $\widetilde{E}$. The centroid of $A_{K,\widetilde{E}}$ is at the origin; indeed,
\begin{align*}
g\big( A_{K,\widetilde{E}} \big)
	= \frac{ \int_{K|\widetilde{E}} y\, A_{K,\widetilde{E}}(y) \, dy }{ \int_{K|\widetilde{E}} A_{K,\widetilde{E}} \, dy }
	= \frac{ \int_K ( x|\widetilde{E} ) \, dx }{ \mbox{vol}_n (K) }
	= g(K) | \widetilde{E} = o .
\end{align*}
From Theorem \ref{main},
\begin{align*}
\frac{ \mbox{vol}_k (K\cap E\cap\theta^+) }{ \mbox{vol}_k (K\cap E) }
	= \frac{ \int_0^\infty A_{K,\widetilde{E}}(s\theta)\, ds }{ \int_{-\infty}^\infty A_{K,\widetilde{E}}(s\theta)\, ds }
	\geq \left( \frac{ \gamma + 1 }{ \gamma d +\gamma + 1 } \right)^\frac{ \gamma +1 }{\gamma} = \left( \frac{k}{n+1} \right)^k
\end{align*}
with equality if and only if
\begin{itemize}
\item $A_{K,\widetilde{E}}(y) = m \mathcal{X}_{K|\widetilde{E}}(y) \Big( - \langle y,\xi\rangle + r \langle \theta,\xi\rangle \Big)^\frac{1}{\gamma}$ for some constants $m,r >0$ and a unit vector $\xi\in \widetilde{E}\cap S^{n-1}$ such that $\langle\theta,\xi\rangle >0$;
\item $K|\widetilde{E} = \mbox{conv} \left( - \left( \frac{d\gamma}{\gamma +1}\right) r\theta, \ r \theta + D \right)$ for some $(d-1)$-dimensional convex body $D\subset\widetilde{E}\cap\xi^\perp$ whose centroid is at the origin.
\end{itemize}
These equality conditions are equivalent to the ones given in the corollary statement, where $m$ is the $(k-1)$-dimensional volume of $D_0$, $r = \langle z,\theta\rangle$, $\widetilde{E}\cap \xi^\perp = F|\widetilde{E}$, and $D = D_1|\widetilde{E}$. 
\end{proof}

\begin{rem}
Observe that the inequality in Corollary \ref{Grunbaum Ineq Sec Thm} is the limiting case of the inequality in Corollary \ref{general prob} as $\gamma$ goes to infinity. This corresponds to the fact that $\infty$-concave functions, defined by taking the limit in (\ref{gamma def}), are the indicator functions of convex sets. 
\end{rem}

\begin{rem}
We are able to recover Gr\"unbaum's inequality for projections from Gr\"unbaum's inequality for sections. Consider any convex body $K\subset\mathbb{R}^n$ with its centroid at the origin. Let $\widetilde{K}$ be the Steiner symmetrization of $K$ with respect to the $k$-dimensional subspace $E\subset\mathbb{R}^n$. Specifically, 
\begin{align*}
\widetilde{K} = \bigcup_{y\in K|E} \left\{ y + \left( \frac{ \mbox{vol}_{n-k} \left( K \cap \lbrace y + E^\perp \rbrace \right) }
	{ \mbox{vol}_{n-k} \left( B_2^{n-k} \right) } \right)^\frac{1}{n-k} B_2^{n-k} \right\} ,
\end{align*}
where $B_2^{n-k}$ is the centred Euclidean ball of unit radius in $E^\perp$. Now, $\widetilde{K}$ is a convex body with its centroid at the origin, and 
\begin{align*}
\widetilde{K}\cap E \cap \theta^+ = \big( K|E \big) \cap \theta^+ \quad \mbox{for all} \quad \theta\in E\cap S^{n-1} .
\end{align*}
\end{rem}

\section*{Acknowledgements}

\noindent This material is based upon work supported by the National Science Foundation under Grant No. DMS-1440140 while the second and third named authors were in residence at the Mathematical Sciences Research Institute in Berkeley, California, during the Fall 2017 semester. The authors were also partially supported by a grant from NSERC. \\

\noindent The authors thank Dmitry Ryabogin and Vladyslav Yaskin for their guidance. They also thank Beatrice-Helen Vritsiou for reading the manuscript and providing helpful comments.

\end{document}